\theoremstyle{plain}\newtheorem{definition}{Definition}[section]
\theoremstyle{plain}\newtheorem{lemma}[definition]{Lemma}
\theoremstyle{plain}\newtheorem{proposition}[definition]{Proposition}
\theoremstyle{definition}
\theoremstyle{plain}\newtheorem{assumption}{Assumption}
\theoremstyle{plain}\newtheorem{theorem}[definition]{Theorem}
\theoremstyle{plain}\newtheorem{corollary}[definition]{Corollary}
\theoremstyle{definition}\newtheorem{remark}[definition]{Remark}
\def\TCS{	T_{{\rm CS},n}}
\def\TMed{	T_{{\rm Med},n}}
\def\THL{	T_{{\rm HL},n}}
\def\sCS{\sigma_{\rm CS}}
\def\sMed{\sigma_{\rm Med}}
\def\sHL{\sigma_{\rm HL}}
\def\hsCS{\hat\sigma_{{\rm CS},n}}
\def\hsMed{\hat\sigma_{{\rm Med},n}}
\def\hsHL{\hat\sigma_{{\rm HL},n}}
\def\PNED{$P$NED}
\begin{document}
\title{Studentized $U$-quantile processes under dependence with applications to change-point analysis}
\author{Daniel Vogel* and Martin Wendler}
\date{}

\maketitle

\begin{center}
\footnotesize{
\noindent
Institute for Complex Systems \& Mathematical Biology, University of Aberdeen, \\
Aberdeen AB24 3UE, United Kingdom, \emph{daniel.vogel@abdn.ac.uk}  \\

\medskip
\noindent
Institut f\"ur Mathematik \& Informatik, Ernst Moritz Arndt Universit\"at Greifswald, \\
17487 Greifswald, Germany, \emph{martin.wendler@uni-greifswald.de} \\
}
\end{center}

\begin{abstract} 
Many popular robust estimators are $U$-quantiles, most notably the Hodges--Lehmann location estimator and the $Q_n$ scale estimator.
We prove a functional central limit theorem for the $U$-quantile process without any moment assumptions and under weak short-range dependence conditions.
We further devise an estimator for the long-run variance and show its consistency, from which the convergence of the studentized version of the  $U$-quantile process to a standard Brownian motion follows. 
This result can be used to construct CUSUM-type change-point tests based on $U$-quantiles, which do not rely on bootstrapping procedures.
We demonstrate this approach in detail with the example of the Hodges--Lehmann estimator for robustly detecting changes in the central location. A simulation study confirms the very good efficiency and robustness properties of the test. Two real-life data sets are analyzed.
\end{abstract}

keywords:
CUSUM test, Hodges--Lehmann estimator, Long-run variance, Median, Near epoch dependence, Robustness, Weak invariance principle.


\section{Introduction}
\label{sec:intro}

Let $X_1,\ldots,X_n$ be a (not necessarily independent) sample from some univariate distribution $F$. For a symmetric, measurable function $g:\R^2 \to \R$, the average of the $\binom{n}{2}$ values $g(X_i,X_j)$, $1 \le i < j \le n$, is called a $U$-statistic with kernel $g$. If the data are independent, this is an unbiased estimator of the quantity $E(g(X_1,X_2))$. A prominent textbook example is the scale estimator known as Gini's mean difference, which is obtained for $g(x,y) = |x-y|$.

Instead of taking the average, one may also consider the sample median of $g(X_i,X_j)$, $1 \le i < j \le n$, or more generally any sample $p$-quantile, $0 < p < 1$. Such a statistic is called a $U$-quantile. 
Several estimators that have gained popularity in robust statistics are $U$-quantiles. For instance, taking $p = 1/4$ and the above mentioned kernel $g(x,y) = |x-y|$ yields the $Q_n$ scale estimator \citep{RousseeuwCroux1993}. 
Similarly, choosing the sample median and the kernel $g(x,y) = (x+y)/2$ yields the Hodges--Lehmann estimator of location \citep{Hodges1963, Sen1963},
\be \label{eq:hl}
	\hat{h}_n = {\rm median}\left\{ (X_i + X_j)/2 \,|\, 1 \le i < j \le n \right\}.
\ee
The motivation for the present article originates in the authors' interest in robust change-point detection. Let us consider for an instant the change-point-in-location problem. Specifically, if we let $(Y_i)_{1 \le i \le n}$ be a centered stationary sequence and assume the data $(X_i)_{1\le i \le n}$ to follow the model $X_i = Y_i + \mu_i$, $1 \le i \le n$, we want to test the hypothesis
\begin{equation*} 
	H_0 : \ \mu_1= \mu_2 = \ldots = \mu_n 
\end{equation*}
against the alternative
\[
	H_1:\ \exists\, k\in\{1,\ldots,n-1\}:\ \mu_1=\ldots=\mu_k \neq \mu_{k+1} = \ldots =\mu_n.
\]
The usual CUSUM test statistic for detecting changes in the central location can be written as
\be \label{eq:cs}
	\TCS = \max_{1\le k\le n} \frac{k}{\sqrt{n}} | \bar{X}_k - \bar{X}_ n |, 
\ee
where $\bar{X}_k$ denotes the mean of the first $k$ observations. For a stationary sequence $X_k$, $k \in \Z$, satisfying suitable moment and short-range dependence conditions, $\TCS$ converges in distribution to $\sigma_{\rm CS} \sup_{t \in [0,1]} |B(t)|$, where 
\be \label{eq:cs var}
	\sCS^2 = \sum_{k = -\infty}^\infty \cov (X_0,X_k) 
\ee
is the long-run variance $\lim_{n \to \infty} \var(\bar{X}_n)$ of the mean, and $B$ denotes a Brownian bridge.
The main tool for proving the convergence of $\TCS$ is an invariance principle (or functional central limit theorem) for the partial sum process
\be \label{eq:psp}
	  \Big( \frac{1}{\sqrt{n}} \sum_{i = 0}^{[s n]} (X_i-EX_1) \Big)_{0 \le s \le 1} 
		=\Big( \frac{[ n s ]}{\sqrt{n}} ( \bar{X}_{1:[ns]} - EX_1) \Big)_{0 \le s \le 1},
\ee
which one may also view as a partial mean process. The first objective of the present paper is to establish a functional limit theorem under short-range dependence for the $U$-quantile process, i.e., the process obtained from the right-hand side of (\ref{eq:psp}) by replacing the sample mean by a $U$-quantile and $EX_1$ by the corresponding population value (Theorem \ref{theo:1}). The second main theoretical contribution is to propose and establish the consistency of an estimator for the long-run variance term that appears in the limit process (Theorem \ref{theo:2}).
These results can be used to devise a CUSUM-type change-point test for location based on the Hodges--Lehmann estimator, which is expected to have a much higher robustness against heavy tails than the classical CUSUM test while retaining essentially the same efficiency under normality, as it is known that the Hodges--Lehmann estimator has an asymptotic efficiency of 95\% with respect to the mean at normality \citep[e.g.][]{choudhury:serfling:1988}. Similarly, the classical approach to the change-in-scale detection problem is a CUSUM-type test statistic, where the mean is replaced by the sample variance. This goes back to \citet{inclan:tiao:1994}, and has been extended to broader settings by several authors \citep{gombay:horvath:huskova:1996, lee:park:2001, wied:arnold:bissantz:ziggel:2012}. This test suffers even more so from the vulnerability to outliers and heavy tails. Our results can also be used to devise an alternative test for changes in the variability based on the highly robust $Q_n$ scale estimator.

The outline of the paper is as follows. The limit theorems for general $U$-quantiles are given in Section \ref{sec:limit}, with the proofs being deferred to the Appendix. In Section \ref{sec:hl}, we investigate the application of the results to the problem of change-in-location detection by means of the Hodges--Lehmann estimator. In Section \ref{sec:sim}, we analyze power and finite-sample properties of this test and compare it to the classical CUSUM test and a similar test based on the median by means of numerical simulations. The simulation results confirm that the good efficiency and robustness properties of the Hodges--Lehmann estimator translate into similar properties of the test. The application of the test is demonstrated at two data examples in Section \ref{sec:data}.


\section{Limit theorems for $U$-quantiles under dependence}
\label{sec:limit}

Let $(X_i)_{i\in\Z}$ be a strictly stationary sequence of random variables. 
The empirical $p$-$U$-quantile can be written as the generalized inverse $U^{-1}_n(p)$  of the empirical $U$-distribution function 
\[
	U_n(t)  = \frac{2}{n(n-1)}\sum_{1\leq i<j\leq n}\ind{g\left(X_i,X_j\right)\leq t}.
\] 
To allow smoothed estimators of the generalized distribution function as well, we replace $\ind{g\left(x,y\right)\leq t}$ by a more general function $h(x,y,t)$.

\begin{definition} \label{def:1}
We call a nonnegative, bounded, measurable function $h:\R\times\R\times\R\rightarrow\R$ which is symmetric in the first two arguments and non-decreasing in the third argument a $U$-quantile kernel function. For fixed $t\in\R$, we call
\begin{equation*}
	U_n(t) = \frac{2}{n(n-1)}\sum_{1\leq i<j\leq n}h\left(X_i,X_j,t\right)
\end{equation*}
the $U$-statistic with kernel $h\left(\cdot,\cdot,t\right)$ and the process $\left(U_n(t)\right)_{t\in\R}$ the empirical $U$-distribution function. 
We define the population $U$-distribution function as $U(t) = E\left[h\left(X,Y,t\right)\right]$, where $X$, $Y$ are independent with the same distribution as $X_0$.
Furthermore, $U^{-1}(p) = \inf\{t|U(t)\geq p\}$ is called the $p$-$U$-quantile and $U_n^{-1}(p) = \inf\{t|U_n(t)\geq p\}$ the empirical $p$-$U$-quantile.
\end{definition}

To study the empirical $U$-distribution function, we need a functional version of the Hoeff\-ding decomposition \citep{hoeffding:1948}. We write $U_n(t)$ as
\begin{equation*}
	U_n(t)=U(t)+\frac{2}{n}\sum_{i=1}^{n}h_{1}\left(X_{i},t\right)+\frac{2}{n\left(n-1\right)}\sum_{1\leq i<j\leq n}h_{2}\left(X_{i},X_{j},t\right)
\end{equation*}
where
\begin{align} \label{eq:hoeffding}
	h_1(x,t) & =Eh(x,X_0,t)-U(t), \\ 
	h_2(x,y,t) &=h(x,y,t) - h_1(x,t) - h_1(y,t) - U(t). \nonumber
\end{align}
$U$-quantiles can be analyzed using a generalized Bahadur representation. \citet{Bahadur1966} showed that the empirical quantile can be approximated by a linear transform of the empirical distribution function. This was generalized by \citet{Geertsema1970} to $U$-quantiles of independent data. The rate of convergence was improved by \citet{choudhury:serfling:1988}, \citet{dehling:denker:philipp:1987} and \citet{Arcones1996} later. A generalized Bahadur representation for $U$-quantiles of dependent data was recently established by \citet{Wendler2011,Wendler2012}.

Concerning the serial dependence structure of the process $(X_i)_{i\in\Z}$, we assume it to be near epoch dependent in probability (\PNED) on an absolutely regular process.
For two $\sigma$-fields $\mathcal{A},\mathcal{B}\subset\mathcal{F}$ on the probability space $(\Omega,\mathcal{F},P)$, the absolute regularity coefficient $\beta (\mathcal{A},\mathcal{B}) = E [\sup_{A\in\mathcal{A}}\left| P (A |\mathcal{B}) - P (A) \right|]$ is a measure of dependence of $\calA$ and $\calB$. Let $(Z_i)_{i\in\Z}$ be a stationary process. The absolute regularity coefficients of $(Z_i)_{i\in\Z}$ are given by
\begin{equation*}
	\beta_{k} = \beta\left(\sigma(\ldots,Z_{-1},Z_0),\sigma(Z_{k},Z_{k+1},\ldots)\right), \qquad k \in \N.
\end{equation*}
The process $(Z_i)_{i\in\Z}$ is called absolutely regular if $\beta_k\rightarrow0$ as $k\rightarrow\infty.$
We will not study absolutely regular processes themselves, as important classes of time-series like linear processes are not covered.  Instead, we study processes which are near epoch dependent on absolutely regular processes. 
\begin{definition} Let $\left((X_i,Z_i)\right)_{i\in\Z}$ be a stationary process.
\begin{enumerate}
\item 
We say that $(X_i)_{i\in\Z}$ is $L_p$ near epoch dependent, $p \ge 1$, on the process $(Z_i)_{i\in\Z}$ with approximation constants $(a_{l,p})_{l\in\N}$ if $\lim_{l \rightarrow \infty} a_{l,p} = 0$ and
\begin{equation*}
	\left(E \left| X_0 - E (X_0 | \sigma(Z_{-l}, \ldots, Z_l) ) \right|^p\right)^{\frac{1}{p}} \leq a_{l,p},  \qquad l \in \{ 0, 1,2, \ldots \}.
\end{equation*}
\item 
We say that $\left(X_i\right)_{i\in\Z}$ is near epoch dependent in probability (\PNED) on the process $(Z_i)_{i\in\Z}$ with approximation constants $(a_l)_{l\in\N}$ if $a_l \to 0$ as $l \to \infty$ and there is a sequence of functions $f_l:\R^{2l+1}\rightarrow \R$ and a non-increasing function $\phi: (0,\infty)\rightarrow(0,\infty)$ such that
\begin{equation*}
	P\left(|X_0-f_l(Z_{-l},\ldots,Z_l)|>\epsilon\right)\leq a_l\phi(\epsilon)
\end{equation*}
for all $l\in\N$ and $\epsilon>0$.
\end{enumerate}
\end{definition}
Near epoch dependent processes are also called approximating functionals \citep[e.g.][]{borovkova:burton:dehling:2001}.
This class of short-range dependent processes includes all time series models relevant in econometrics, like ARMA-processes and GARCH-processes \citep[e.g.][]{Hansen1991}, and furthermore also covers expanding dynamical systems, where the sequence $X_{n+1}=T(X_n)$ is deterministic apart from the initial value $X_0$ \citep[see e.g.][]{hofbauer:keller:1982}.

We prefer to use near epoch dependence in probability (\PNED) instead of the usual $L_2$ near epoch dependence since it does not necessitate the existence of any moments. We consider quantile-based estimators, a decisive advantage of them being their moment-freeness, and we do not want to limit the scope of our results in this respect by implicitly introducing moment assumptions in the short-range dependence conditions.
The concept of \PNED\ used here was introduced by \citet{dehling:vogel:wendler:wied:2015:vs4}. Similar concepts that embody the idea of approximating $(X_i)_{i\in\Z}$ in a probability sense rather than an $L_p$ sense can be found under the name of $S$-mixing in \citet{berkes:hoermann:schauer:2009} and under the name of $L_0$-approximability in \citet[][Chapter 6]{poetscher:prucha:1997}. If $(X_i)_{i\in\Z}$ is near epoch dependent in probability on the process $(Z_i)_{i\in\Z}$, we can represent $X_n$ almost surely as $X_n=f_\infty((Z_{n+l})_{l\in\Z})$.
We will require the \PNED\ approximation constants $a_l$ and the absolute regularity coefficients $\beta_k$ to fulfill certain rate conditions.
\begin{assumption} \label{ass:ned}
The sequence $(X_i)_{i\in\Z}$ is \PNED\ on an absolutely regular sequence $(Z_i)_{i\in\Z}$ such that $a_l\phi(l^{-6}) = O(l^{-6})$ as $l \to \infty$ and $\sum_{k=1}^\infty k\beta_k < \infty$.
\end{assumption}

So far, the $U$-statistic kernel $g$ is completely arbitrary. In proofs for weakly dependent data, the dependent random variables are approximated by independent random variables. In order to control the error induced by this approximation, we require some form of continuity condition on $h$ with respect to the marginal distribution of the process. 
\begin{assumption} \label{ass:variation}
Let $0 < p < 1$ and $h:\R\times\R\times\R\rightarrow\R$ be a bounded kernel function such that for a constant $L$ and for all $t$ in a neighborhood of $U^{-1}(p)$ and all $\epsilon >0$
\begin{equation*}
 E\left[\sup_{\substack{x,y:\\ \left\|(x,y)-(X,Y)\right\|\leq \epsilon}} \left|h\left(x,y,t\right)-h\left(X,Y,t\right)\right|^2\right]\leq L\epsilon,
\end{equation*}
where $X$, $Y$ are independent with the same distribution as $X_0$ and $\left\|(x_1,x_2)\right\|=(x_1^2+x_2^2)^{1/2}$ denotes the Euclidean norm.
\end{assumption}
This condition holds for all Lipschitz continuous kernel functions $h$. If Lipschitz continuity does not hold, as it is the case for kernels of the type $h(x,y,t) = \ind{g\left(x,y\right)\leq t}$, we need some regularity conditions on the distribution of $X_0$, cf.\ Remark \ref{rem:1} below.

Since we consider sample quantiles, we further require that the $U$-distribution function $U$ behaves regularly at $U^{-1}(p)$. Let $u(t) = U'(t)$ denote the derivative of the $U$-distribution function. 
\begin{assumption} \label{ass:smooth}
Let $U(t) = E\left[h\left(X,Y,t\right)\right]$ be differentiable in a neighborhood of $U^{-1}(p)\in\R$ with $u\left(U^{-1}(p)\right)>0$ and
\begin{equation} \label{eq:smooth}
\left|U(t)-p-u\left(U^{-1}(p)\right)\left(t-U^{-1}(p)\right)\right|
= o\left(\left|t-U^{-1}(p)\right|^{3/2}\right)\ \ \ \text{as}\ \ t\rightarrow U^{-1}(p).
\end{equation}
\end{assumption}

We are now ready to state the first of our two main results.

\begin{theorem} \label{theo:1} 
Under Assumptions \ref{ass:ned}, \ref{ass:variation}, and \ref{ass:smooth}, we have for the $U$-quantile process that 
\begin{equation*}
	\left(\frac{[ns]}{\sqrt{n}}\left(U_{[ns]}^{-1}(p)-U^{-1}(p)\right)\right)_{s\in[0,1]} \ \cid \ \sigma_p W
\end{equation*}
in the Skorokhod space $D[0,1]$, where $W$ is a standard Brownian motion and 
\begin{equation} \label{eq:lrv}
	\sigma^2_p = \frac{4}{u^2(U^{-1}(p))}\sum_{r=-\infty}^\infty \cov\left(h_1(X_0,U^{-1}(p)),h_1(X_r,U^{-1}(p))\right).
\end{equation}
\end{theorem}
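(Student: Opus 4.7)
The plan is to reduce the $U$-quantile process to the partial-sum process of the linear part of its Hoeffding decomposition by means of a uniform-in-$s$ Bahadur representation, and then to invoke a functional central limit theorem for bounded $P$NED functionals. Throughout write $t_0 = U^{-1}(p)$, so $U(t_0)=p$ by Assumption \ref{ass:smooth}. Applying (\ref{eq:hoeffding}) at $t=t_0$ gives, for each $2 \le k \le n$,
\[
 k\bigl(U_k(t_0)-p\bigr) \;=\; 2\sum_{i=1}^{k} h_1(X_i,t_0) \;+\; \frac{2}{k-1}\sum_{1\le i<j\le k} h_2(X_i,X_j,t_0).
\]

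The central step is a functional Bahadur representation
\[
 \sup_{s\in[0,1]} \Bigl| \frac{[ns]}{\sqrt n}\bigl(U_{[ns]}^{-1}(p)-t_0\bigr) + \frac{1}{u(t_0)}\cdot\frac{[ns]}{\sqrt n}\bigl(U_{[ns]}(t_0)-p\bigr) \Bigr| \;\xrightarrow{P}\;0.
\]
For fixed $s$ this is essentially the pointwise Bahadur representation for $U$-quantiles of $P$NED sequences established by \citet{Wendler2011,Wendler2012}. Upgrading to a uniform statement is a two-scale argument. On an initial segment $s\in[0,\delta]$ I would use a maximal inequality together with the local linearity (\ref{eq:smooth}) of $U$ at $t_0$ and boundedness of $h$ to show that both the normalized quantile process and the normalized partial-sum process are $O_P(\sqrt{\delta})$ on $[0,\delta]$, uniformly in $n$; on the bulk $[\delta,1]$ I would apply the pointwise representation along a deterministic grid in $s$ and use the monotonicity of $t \mapsto U_k(t)$ in $t$ together with (\ref{eq:smooth}) to control the oscillation between grid points. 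Letting $\delta\downarrow 0$ completes this step.

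Given the uniform representation, the proof reduces to two limit statements. First, a maximal inequality for the degenerate part,
\[
 \frac{1}{\sqrt n}\max_{2\le k\le n} \frac{1}{k-1}\Bigl|\sum_{1\le i<j\le k} h_2(X_i,X_j,t_0)\Bigr| \;\xrightarrow{P}\;0,
\]
which I would obtain from a variance bound $\var\bigl(\sum_{i<j\le k}h_2(X_i,X_j,t_0)\bigr) = O(k^2)$ for bounded $h$ under Assumptions \ref{ass:ned} and \ref{ass:variation} — boundedness of $h$ ensures all moments, Assumption \ref{ass:variation} allows replacing $X_i$ by its absolutely-regular approximation $f_l(Z_{i-l},\ldots,Z_{i+l})$ with controlled error, and $\sum k\beta_k<\infty$ sums the covariance contributions from the absolutely regular skeleton — followed by a dyadic/chaining argument to promote the pointwise bound to the maximum. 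Second, a functional invariance principle for the linear part,
\[
 \Bigl(\frac{1}{\sqrt n}\sum_{i=1}^{[ns]} h_1(X_i,t_0)\Bigr)_{s\in[0,1]} \;\cid\; \sigma\,W,\qquad \sigma^2=\sum_{r\in\Z}\cov\bigl(h_1(X_0,t_0),h_1(X_r,t_0)\bigr),
\]
which follows from a standard FCLT for bounded $P$NED functionals of absolutely regular sequences: $h_1(\cdot,t_0)$ is bounded because $h$ is, and Assumption \ref{ass:ned} supplies the requisite mixing and approximation rates.

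Combining the two limits via Slutsky's theorem in $D[0,1]$ shows that the $U$-quantile process converges to $-\tfrac{2}{u(t_0)}\sigma W$, which equals $\sigma_p W$ in law since $-W\stackrel{d}{=}W$ and $\sigma_p^2 = 4\sigma^2/u^2(t_0)$, matching (\ref{eq:lrv}). The main obstacle I anticipate is the uniform Bahadur step, particularly near $s=0$ where $[ns]$ is small and the pointwise remainder rate does not propagate directly to the supremum; the two-scale split together with the monotonicity of $U_k$ and the smoothness estimate (\ref{eq:smooth}) is the lever that makes this work. The $P$NED-adapted variance bound and maximal inequality for the degenerate $U$-statistic part, while technical, should be an incremental extension of existing $P$NED machinery.
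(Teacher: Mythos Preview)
Your overall architecture --- Bahadur representation plus an invariance principle for the linear part --- matches the paper's proof. The substantive difference is in how uniformity in $s$ is obtained, and here the paper's route is both simpler and sidesteps a step in your plan that does not work as written.

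The paper does not use a two-scale split or any gridding in $s$. Instead, Proposition~\ref{pro2} establishes an \emph{almost sure polynomial rate} for the pointwise Bahadur remainder, $R_k = U_k^{-1}(p) - t_0 + (U_k(t_0)-p)/u(t_0) = O(k^{-5/8})$ a.s. Because this bound holds for all $k$ simultaneously, uniformity in $s$ is a one-line consequence: $\sup_{s}\tfrac{[ns]}{\sqrt n}|R_{[ns]}| = n^{-1/2}\max_{k\le n} k|R_k| \le C\, n^{-1/2} n^{3/8}\to 0$ a.s. No separate treatment near $s=0$ is needed. The rate $O(k^{-5/8})$ comes from (i) a dyadic maximal bound $\|\max_{n\le 2^m}\sum h_2\|_2 \le C\, 2^{5m/4}m$ for the degenerate part (Lemma~\ref{lem6}), yielding $\sum h_2 = O(n^{5/4}\log^2 n)$ a.s.\ rather than merely the $O_P(n)$ your variance bound delivers, and (ii) an a.s.\ oscillation bound for the empirical $U$-distribution over a $t$-window of width $O(\sqrt{\log\log n/n})$ (Proposition~\ref{pro2}(A)), combined with a law of the iterated logarithm (Proposition~\ref{pro1}) and Vervaat's inversion theorem.

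Your proposed interpolation step has a gap: gridding in $s$ means the index $k=[ns]$ changes between grid points, and neither $k\mapsto U_k(t_0)$ nor $k\mapsto U_k^{-1}(p)$ is monotone, so ``monotonicity of $t\mapsto U_k(t)$'' gives no control over $R_k - R_{k'}$. Monotonicity in $t$ is indeed the key device, but in the paper it is used to grid the \emph{threshold} variable $t$ inside the proof of the pointwise Bahadur rate (Proposition~\ref{pro2}(A)), not to interpolate in $s$. Once you have the a.s.\ rate, the uniform-in-$s$ conclusion is free.
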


%
%

Unless the distribution of the whole process $(X_i)_{i\in\Z}$ is fully specified, the long-run variance $\sigma_p^2$ is unknown. For statistical applications, it is therefore desirable to have an estimate of $\sigma_p^2$. The estimator we propose below is obtained by replacing all unknown quantities in the right-hand side of (\ref{eq:lrv}) by their empirical versions. We restrict our attention to the original situation where $h$ takes on the form $h(x,y,t) = \ind{g(x,y) \le t}$. This allows to directly apply usual kernel density estimation to the $U$-statistic density $u$. Let
\be \label{eq:density}
	\hat{u}_n(t) = \frac{2}{n(n-1)d_n}\sum_{1\leq i<j\leq n}K\left(\frac{g(X_i,X_j)-t}{d_n}\right), 
\ee
where $K$ is a density kernel and $d_n$ a bandwidth which fulfill the following conditions. 
\begin{assumption} \label{ass:density}
The function $K$ is symmetric around 0, Lipschitz continuous with bounded support and bounded variation, and it integrates to 1. The bandwidth $d_n$ satisfies $d_n\rightarrow0$ and $n d_n^{8/3}\rightarrow\infty$ as $n \to \infty$.
\end{assumption}
Furthermore, we need an empirical version of $h_1$ from (\ref{eq:hoeffding}). Let 
\[
	\hat{h}_1(x,t) = \frac{1}{n}\sum_{i=1}^n h(x,X_i,t)-\frac{1}{n^2}\sum_{i,j=1}^nh(X_i,X_j,t),
\]
and consider the sample autocovariance of $(\hat{h}_1(X_i,t))_{1 \le i \le n}$ for lag $r$, i.e.,
\begin{equation*}
	\hat{\rho}(r,t) =\frac{1}{n}\sum_{i=1}^{n-r}\hat{h}_1(X_i,t)\,\hat{h}_1(X_{i+r},t).
\end{equation*}
We estimate the infinite-sum part in (\ref{eq:lrv}) by a heteroscedasticity and autocorrelation consistent (HAC) kernel estimator, and define
\[
	\hat{\sigma}^2_{p,n}
	= \frac{4}{\hat{u}_n^2(U_n^{-1}(p))}
	\, \sum_{r=-(n-1)}^{n-1} W(r/b_n) \, \hat{\rho}(r,U_n^{-1}(p)),
\]
where $W$ and $b_n$ fulfill the following conditions.
\begin{assumption}\label{ass:hac} The function $W:[0,\infty)\rightarrow[0,1)$ is continuous at 0 and at all but a finite number of points. Furthermore, $|W|$ is dominated by a non-increasing,
integrable function and
$\int_0^\infty\left|\int_0^\infty W(t)\cos(xt)dt\right|dx<\infty$.
The bandwidth $b_n$ satisfies $b_n\rightarrow \infty$ and $b_n/\sqrt{n}\rightarrow0$ as $n\rightarrow\infty$.
\end{assumption}
Assumption \ref{ass:hac} mainly coincides with Assumption 1 of \citet{dejong:2000}. It is satisfied by a large class of kernels, including the Bartlett kernel $W(t) = (1 - |t|) \ind{|t| \le 1}$.
Finally, we need a continuity condition similar to Assumption \ref{ass:variation} also for the kernel $g$.
\begin{assumption} \label{ass:variation2}
There is a constant $L$ such that for all $\epsilon>0$
\begin{equation*}\label{line8}
		E\left(\sup_{\substack{x,y:\\ \left\|(x,y)-(X,Y)\right\|\leq \epsilon}}\left|g\left(x,y\right)-g\left(X,Y\right)\right|\right)^2\leq L\epsilon,
\end{equation*}
where $X$, $Y$ are independent with the same distribution as $X_0$.
\end{assumption}
Conditions of this type (including Assumption \ref{ass:variation} above) are also called variation conditions and were first introduced by \citet{denker:keller:1986}.  They are mild regularity conditions which we usually find to be fulfilled for kernels and data distributions that are of interest for statistical applications. Specific conditions on the distribution $F$ implied by Assumptions \ref{ass:variation} and \ref{ass:variation2} in case of the Hodges--Lehmann estimator are discussed in Remark \ref{rem:1}.

We have the following consistency result for the long-run variance estimator. 
\begin{theorem} \label{theo:2} 
Under Assumptions 1 to 6 we have $\hat\sigma^2_{p,n} \cip \sigma^2_p$ 
as $n\rightarrow\infty$.
\end{theorem}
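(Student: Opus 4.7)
The plan is to factor $\hat\sigma^2_{p,n}$ as the product of $4/\hat u_n^2(U_n^{-1}(p))$ and the weighted autocovariance sum, and to establish the consistency of each factor separately. By Slutsky's theorem and the continuous mapping theorem, combined with the continuity of the map $x\mapsto 1/x^2$ at $u(U^{-1}(p))>0$, it suffices to show that $\hat u_n(U_n^{-1}(p))\cip u(U^{-1}(p))$ and that the HAC sum converges in probability to $\sum_{r\in\Z}\cov(h_1(X_0,U^{-1}(p)),h_1(X_r,U^{-1}(p)))$.

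For the density factor, I would first invoke Theorem \ref{theo:1} (evaluated at $s=1$) to obtain $U_n^{-1}(p)\cip U^{-1}(p)$. Then I would prove a uniform-in-$t$ consistency result $\sup_{|t-U^{-1}(p)|\le\delta_n}|\hat u_n(t)-u(t)|\cip 0$ for a suitable $\delta_n\to 0$. This reduces to a bias bound (controlled by the smoothness in Assumption \ref{ass:smooth} applied to $U$, together with the bandwidth $d_n\to 0$ and Assumption \ref{ass:density}) plus a stochastic bound on $\hat u_n(t)-E\hat u_n(t)$. The stochastic term is a $U$-statistic with Lipschitz kernel $K((g(\cdot,\cdot)-t)/d_n)/d_n$; combining the variation condition (Assumption \ref{ass:variation2}) with the \PNED\ structure and Assumption \ref{ass:ned} yields a variance bound of order $(nd_n)^{-1}$ or so, and $nd_n^{8/3}\to\infty$ is precisely the scaling needed for this term to vanish after accounting for bias. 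Continuity of $u$ at $U^{-1}(p)$ then gives $\hat u_n(U_n^{-1}(p))\cip u(U^{-1}(p))$.

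For the HAC factor I would chain two substitutions. First, replace $U_n^{-1}(p)$ by the true quantile $t_0=U^{-1}(p)$: using the variation condition on $h$ (Assumption \ref{ass:variation}) together with $U_n^{-1}(p)\cip t_0$, the increment $\hat\rho(r,U_n^{-1}(p))-\hat\rho(r,t_0)$ is controlled uniformly in $r$ so that, after weighting by $W(r/b_n)$ and summing, the error is $o_P(1)$ provided $b_n/\sqrt n\to 0$. Second, replace $\hat h_1$ by $h_1$: since $\hat h_1(x,t)-h_1(x,t)$ is the difference of two ergodic averages, a law of large numbers for \PNED\ sequences (which follows from Assumption \ref{ass:ned}) gives $L^2$ convergence of $\hat h_1(X_i,t_0)$ to $h_1(X_i,t_0)$ uniformly enough to absorb the sum $\sum_r W(r/b_n)$ that is $O(b_n)$. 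The resulting clean HAC sum $\sum_r W(r/b_n)n^{-1}\sum_i h_1(X_i,t_0)h_1(X_{i+r},t_0)$ converges in probability to $\sum_{r\in\Z}\cov(h_1(X_0,t_0),h_1(X_r,t_0))$ by an application of the result of \citet{dejong:2000} (cf.\ Assumption \ref{ass:hac}), once it is verified that $(h_1(X_i,t_0))_{i\in\Z}$ is itself a bounded, \PNED\ stationary sequence on $(Z_i)$ with approximation constants inherited (up to the variation condition) from those of $(X_i)$.

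The main obstacle I expect is the second step above: carefully handling the double plug-in (of $\hat h_1$ for $h_1$ and of $U_n^{-1}(p)$ for $U^{-1}(p)$) \emph{uniformly in the lag} $r$, so that the $O(b_n)$ cost of summing $W(r/b_n)$ does not overwhelm the $O(n^{-1/2})$ rates coming from the plug-in errors. This is where the bandwidth condition $b_n/\sqrt n\to 0$ and the variation conditions (Assumptions \ref{ass:variation} and \ref{ass:variation2}) are used in tandem; the \PNED\ rate $a_l\phi(l^{-6})=O(l^{-6})$ and the $\sum k\beta_k<\infty$ condition supply the covariance bounds needed to make the stochastic terms tight after this summation.
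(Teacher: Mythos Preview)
Your decomposition is the paper's: split $\hat\sigma^2_{p,n}$ into the density factor and the HAC factor, handle the two plug-ins ($t_n\to t_0$ and $\hat h_1\to h_1$) in the HAC part separately, and finish the clean sum with \citet{dejong:2000}. Lemma~\ref{lem13} corresponds to your density step, Lemma~\ref{lem12} to your first substitution, and a lemma from \citet{dehling:vogel:wendler:wied:2015:vs4} to your second.

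One point needs correction. For the substitution $t_n\to t_0$ you invoke Assumption~\ref{ass:variation}, but that assumption controls the variation of $h(x,y,t)$ in $(x,y)$, not in $t$, and gives you nothing toward a uniform-in-$r$ bound on $\hat\rho(r,t_n)-\hat\rho(r,t_0)$. The paper instead exploits the \emph{monotonicity} of $h$ in its third argument (built into Definition~\ref{def:1}) together with $|\hat h_1|\le 1$: this lets one replace products of differences by sums of signed increments and obtain, uniformly in $r$, a bound essentially of the form $4\bigl|n^{-2}\sum_{j_1,j_2}\bigl(h(X_{j_1},X_{j_2},t_n)-h(X_{j_1},X_{j_2},t_0)\bigr)\bigr|$. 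This quantity is then split as $|U_n(t_n)-U_n(t_0)-U(t_n)+p|+|U(t_n)-p|$ and controlled via the Bahadur modulus-of-continuity result (Proposition~\ref{pro2}(A), which needs the almost sure rate $|t_n-t_0|=O(\sqrt{\log\log n/n})$ from Propositions~\ref{pro1}--\ref{pro2}, not merely $t_n\cip t_0$ from Theorem~\ref{theo:1}) plus Assumption~\ref{ass:smooth}. Summing against $W(|r|/b_n)$ then costs $O(b_n)$, and $b_n/\sqrt n\to 0$ closes the argument. The same almost sure rate is what drives the paper's density argument: rather than proving uniform-in-$t$ consistency of $\hat u_n$, Lemma~\ref{lem13} sandwiches $\hat u_n(t_n)$ between upper and lower kernel estimates evaluated at the fixed point $t_0$ and analyzes those via the Hoeffding decomposition, which is where the condition $nd_n^{8/3}\to\infty$ enters (for the degenerate part).
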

The following result is an immediate corollary of Theorems \ref{theo:1} and \ref{theo:2}. Part (A) follows by Slutsky's lemma, and part (B) by a further application of the continuous mapping theorem. 
\begin{corollary} \label{cor:1} Under Assumptions 1 to 6, we have 
\begin{enumerate}[(A)]
\item
$ \displaystyle 
	\left(\frac{[ns]}{\sqrt{n}\, \hat{\sigma}_{p,n}} \left(U_{[ns]}^{-1}(p)-U^{-1}(p)\right)\right)_{s\in[0,1]} \ \cid \ W$ \\[1.5ex]
in the Skorokhod space $D[0,1]$, where $W$ is a standard Brownian motion, and
\item
$ \displaystyle
	\max_{2 \le k\le n} \, \frac{k}{\sqrt{n}\, \hat{\sigma}_{p,n}} \, | U^{-1}_k(p) - U^{-1}_n(p) | \cid \sup_{0\leq s \leq 1} |B(s)|$, \\[1.5ex]
 where  $B(s) = W(s)-s W(1)$, $0 \le s \le 1$, is a standard Brownian bridge.
\end{enumerate}
\end{corollary}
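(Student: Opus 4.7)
The strategy, as indicated by the authors, is to derive part (A) from Theorems~\ref{theo:1} and \ref{theo:2} by a Slutsky-type argument in $D[0,1]$, and to obtain part (B) from (A) by the continuous mapping theorem applied to an appropriate sup-functional.

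For (A), I would set $X_n(s) = \frac{[ns]}{\sqrt{n}}\bigl(U^{-1}_{[ns]}(p) - U^{-1}(p)\bigr)$, so that Theorem~\ref{theo:1} reads $X_n \cid \sigma_p W$ in $D[0,1]$. Non-degeneracy of the limit forces $\sigma_p > 0$, and Theorem~\ref{theo:2} combined with the continuous mapping theorem applied to $x \mapsto x^{-1/2}$ on a neighborhood of $\sigma_p^2$ gives $\hat\sigma_{p,n}^{-1} \cip \sigma_p^{-1}$. Because the second coordinate has a deterministic limit, joint convergence $(X_n, \hat\sigma_{p,n}^{-1}) \cid (\sigma_p W, \sigma_p^{-1})$ in $D[0,1] \times \R$ is automatic; one more application of the continuous mapping theorem to the continuous scalar-times-path map $(f,c) \mapsto c\, f$ yields $X_n / \hat\sigma_{p,n} \cid W$, which is precisely (A).

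For (B) I would introduce the functional $\Phi : D[0,1] \to \R$, $\Phi(f) = \sup_{s\in[0,1]}|f(s) - s\, f(1)|$. Both the affine perturbation $f \mapsto f - (\cdot)\,f(1)$ and the sup-norm are continuous at elements of $C[0,1]$, and standard Brownian motion has continuous paths a.s., so (A) together with the continuous mapping theorem gives
\begin{equation*}
	\Phi(X_n/\hat\sigma_{p,n}) \;\cid\; \Phi(W) \;=\; \sup_{s \in [0,1]}|W(s) - s\, W(1)| \;=\; \sup_{s \in [0,1]}|B(s)|.
\end{equation*}
It then remains to identify $\Phi(X_n/\hat\sigma_{p,n})$ with the CUSUM statistic of (B) up to an asymptotically negligible error. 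At a grid point $s = k/n$ one has $X_n(s)/\hat\sigma_{p,n} - s\, X_n(1)/\hat\sigma_{p,n} = \frac{k}{\sqrt{n}\,\hat\sigma_{p,n}}(U^{-1}_k(p) - U^{-1}_n(p))$. On each subinterval $[k/n, (k+1)/n)$ the first summand of the integrand is constant in $s$ while the second is linear, so the absolute difference attains its sup at an endpoint; the gap between the value at $k/n$ and the left limit at $(k+1)/n$ is $n^{-1}|X_n(1)/\hat\sigma_{p,n}| = O_P(n^{-1/2})$. The terms corresponding to $k \in \{0,1\}$ contribute only $O_P(n^{-1/2})$ as well, so $\Phi(X_n/\hat\sigma_{p,n})$ differs from the CUSUM statistic of (B) by $o_P(1)$, and the claim follows.

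Given the depth of Theorems~\ref{theo:1} and \ref{theo:2}, no substantial obstacle arises in this proof: the real work has already been done. The only point requiring a little care is the grid-vs-continuum identification just outlined, but it reduces to two elementary observations, namely that affine functions on an interval attain their supremum at an endpoint and that $\sqrt{n}\,(U^{-1}_n(p) - U^{-1}(p))/\hat\sigma_{p,n}$ is $O_P(1)$ by (A) evaluated at $s=1$.
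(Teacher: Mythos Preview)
Your proof is correct and follows exactly the route the paper indicates: Slutsky's lemma for (A) and the continuous mapping theorem for (B). You have in fact supplied considerably more detail than the paper does---the authors dispense with the corollary in a single sentence---in particular your careful handling of the grid-versus-continuum identification in (B), which the paper leaves entirely implicit. The only small quibble is your phrase ``non-degeneracy of the limit forces $\sigma_p>0$'': strictly speaking nothing in Assumptions~1--6 rules out $\sigma_p^2=0$, so this is a tacit extra hypothesis (one the paper also makes without comment).
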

We refer to the process in Corollary \ref{cor:1} (A) as the studentized $U$-quantile process.


\section{Robust detection of changes in the central location}
\label{sec:hl} 

We return to the question of change-point detection as outlined in the introduction. The practical implementation of the CUSUM test, cf.~(\ref{eq:cs}), requires the estimation of the long-run variance $\sCS^2$, cf.~(\ref{eq:lrv}), which is usually accomplished by a kernel estimator of the form
\be \label{eq:cs var est}
	\hsCS^2 = \sum_{k=-(n-1)}^{n-1} W(k/b_n) \bigg\{ \frac{1}{n} \sum_{i =1}^{n-|k|} (X_i - \bar{X}_n)(X_{i+|k|}-\bar{X}_n) \bigg\},
\ee
where $W$ and $b_n$ are as in Assumption \ref{ass:hac} \citep[see e.g.][]{aue:horvath:2013}.
The CUSUM test is known to be inefficient under heavy tails and prone to outliers.
It is interesting to note that, although outliers tend to increase the test statistic $\TCS$, the general effect outliers have on the test is not a size distortion, but rather a loss of power: the test statistic is divided by the estimate $\hsCS$, which is even more strongly increased by outliers. 
An intuitive approach to a robust, less outlier-sensitive change-point detection is to replace the sample mean in (\ref{eq:cs}) by an alternative location estimator. We will pursue this approach in the following and examine the median and the Hodges--Lehmann estimator $\hat{h}_n$, cf.~(\ref{eq:hl}), as potential alternatives. 

The problem of change-point-in-location detection is a classic one and well studied, see, e.g., the monograph by \citet{csorgo:horvath:1997}. Articles considering the problem under dependence include among others \citet{Andrews1993}, \citet{kokoszka:leipus:1998}, 
\citet{horvath:kokoszka:steinebach:1999}  and \citet{horvath:steinebach:2000}. The literature on robust analysis of the change-point problem is comparably limited. There are approaches, e.g., based on ranks \citep[e.g.][]{Huskova1997,antoch:huskova:janic:ledwina:2008}, $M$-estimators \citep[e.g.][]{Huskova1996} and $U$-statistics \citep[e.g.][]{Gombay2000,gombay:horvath:2002}. All of these consider independent sequences. Recently, \citet{huskova:marusiakova:2012} considered robust change-point procedures for $\alpha$-mixing sequences. See \citet{Huskova2013} for a recent overview on robust change-point analysis. \citet{Hoyland1965} and \citet{dehling:fried:2012} consider two-sample tests based on the two-sample Hodges--Lehmann estimator for independent and dependent data, respectively, which may provide the basis for robust change-point tests based on the two-sample Hodges-Lehmann estimator.

When replacing the mean in (\ref{eq:cs}), one possibility is the median, presumably the simplest robust location estimator, leading to the test statistic
\be \label{eq:med}
	\TMed = \max_{1\le k\le n} \frac{k}{\sqrt{n}} | \hat{m}_k - \hat{m}_n |, 
\ee
where $\hat{m}_k$ denotes the median of $X_1, \ldots, X_k$. Under the null hypothesis of no change and under appropriate regularity conditions (which include no moment conditions, but smoothness conditions on the distribution $F$ of $X_1$), $\TMed$ converges in distribution to  $\sMed \sup_{t \in [0,1]} |B(t)|$ with 
\be \label{eq:med var}
	\sMed^2 = \frac{1}{f(m)^2} \sum_{k = -\infty}^\infty \cov \left(\ind{X_0\le m}, \ind{X_k\le m}\right) 	
\ee
where $m = F^{-1}(1/2)$ denotes the median of the distribution $F$ and $f$ its density. This convergence result as well as the consistency of the 
long-run variance estimator
\be \label{eq:med var est}
	\hsMed^2 =  \frac{1}{\hat{f}_n(\hat{m}_n)^2} 
	\sum_{k=-(n-1)}^{n-1} W(k/b_n) \Bigg\{ \frac{1}{n} \sum_{i =1}^{n-|k|} (\ind{X_i\le \hat{m}_n} - 1/2)(\ind{X_{i+|k|}\le \hat{m}_n} - 1/2) \Bigg\}
\ee
with a suitable kernel density estimator
\be
	\hat{f}_n(x) = \frac{1}{n\, d_n} \sum_{k =1}^n K \{ (X_k - x)/d_n \} 
\ee
can be shown by similar techniques as Theorems \ref{theo:1} and \ref{theo:2}. 
However, this robustification is paid by a substantial loss in efficiency at normality. The median is known to possess an asymptotic relative efficiency of $\pi/2 = 64\%$ with respect to the mean for independent Gaussian observations. Hence we propose to use the Hodges--Lehmann estimator,
which is also highly robust but possesses an asymptotic relative efficiency of $3/\pi = 95\%$ with respect to the mean at normality. This leads to the test statistic 
\be \label{eq:hl test}
	\THL = \max_{1\le k\le n} \frac{k}{\sqrt{n}} | \hat{h}_k - \hat{h}_n |. 
\ee
It should be noted that \citet{Hodges1963} actually consider the variant 
$\tilde{h}_n = {\rm median}\{(X_i + X_j)/2, X_k \,|\, 1 \le i < j \le n, \, 1 \le k \le n\}$. 
Since $\tilde{h}_n$ and $\hat{h}_n$ behave very similarly and are asymptotically equivalent, we stick to the variant $\hat{h}_n$, to which the $U$-quantile theory applies directly.

For stationary and short-range dependent sequences, this test statistic $\THL$ converges in distribution to $\sHL \sup_{t \in [0,1]} |B(t)|$, where $B$ is a Brownian bridge and
\be \label{eq:hl var}
	\sHL^2 = \frac{4}{u(h)^2} \sum_{k = -\infty}^\infty E \left( \psi(X_0) \psi(X_k) \right).
\ee
Here, $u$ is the density of the distribution of $(X+Y)/2$ for $X, Y \sim F$ independent, $h$ its median, and $\psi(x) = P((x+Y)/2 \le h)-1/2$ for $Y \sim F$.
Implementing the long-run variance estimation technique for $U$-quantiles described in Section \ref{sec:limit}, one obtains
\be \label{eq:hl var est}
	\hsHL^2 =  \frac{4}{\hat{u}(\hat{h}_n)^2} 
	\sum_{k=-(n-1)}^{n-1} W(k/b_n) \Bigg\{ \frac{1}{n} \sum_{i =1}^{n-|k|} \hat\psi_n(X_i) \hat\psi_n(X_{i+|k|}) \Bigg\},
\ee
where $\hat{u}_n$ is given by (\ref{eq:density}) for the kernel $g(x,y)=(x+y)/2$, and 
	$\hat\psi_n(x) =  n^{-1} \sum_{j=1}^n (\ind{(x+X_j)/2 \le \hat{h}_n} - 1/2)$.
The asymptotic behavior of the studentized test statistic $\THL/ \hsHL$ is given by Corollary \ref{cor:1} (B) and is summarized in the following corollary.
\begin{corollary} \label{cor:2}
Let $(X_i)_{i\in\Z}$ be a stationary sequence with marginal distribution $F$ which satisfies Assumption \ref{ass:ned}. Let $F$ be such that Assumptions \ref{ass:variation} and \ref{ass:smooth} are fulfilled for the kernel $h(x,y,t) = \ind{(x+y)/2 \le t}$. If further Assumptions \ref{ass:density} and \ref{ass:hac} are satisfied, then
$\THL/ \hsHL$ converges in distribution to $\sup_{t \in [0,1]} |B(t)|$, where $B$ is as before a Brownian bridge.
\end{corollary}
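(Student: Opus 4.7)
The plan is to recognize $\THL$ as the studentized CUSUM statistic from Corollary \ref{cor:1}(B) applied to the specific $U$-quantile kernel $h(x,y,t) = \ind{(x+y)/2 \le t}$ with $p = 1/2$, and then to verify that the long-run variance estimator $\hsHL^2$ in (\ref{eq:hl var est}) is the corresponding specialization of the generic estimator $\hat\sigma_{p,n}^2$ introduced before Theorem \ref{theo:2}. By Definition \ref{def:1}, the Hodges--Lehmann estimator $\hat{h}_k$ equals $U_k^{-1}(1/2)$ for this kernel and $g(x,y) = (x+y)/2$, so the test statistic in (\ref{eq:hl test}) is literally the quantity in Corollary \ref{cor:1}(B) (before dividing by $\hsHL$).

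Next I would verify that all six assumptions of Theorem \ref{theo:2} hold. Assumptions \ref{ass:ned}, \ref{ass:variation}, \ref{ass:smooth}, \ref{ass:density} and \ref{ass:hac} are either listed directly in the hypothesis or imposed through the conditions on $F$. The only condition not spelled out in the statement is Assumption \ref{ass:variation2} on the kernel $g$, but since $g(x,y) = (x+y)/2$ is Lipschitz with constant $1/\sqrt{2}$ in the Euclidean norm, the supremum inside the expectation is bounded deterministically by $\epsilon/\sqrt{2}$, so the variation condition holds trivially (the constant $L$ is adjusted only to accommodate larger $\epsilon$, which does not intervene in the proof).

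Then I would identify the asymptotic variance and its estimator with their specializations. By the Hoeffding decomposition (\ref{eq:hoeffding}), $h_1(x,t) = P((x+Y)/2 \le t) - U(t)$, which at $t = U^{-1}(1/2) = h$ reduces to $\psi(x)$; moreover $u(h)$ is the density of $(X+Y)/2$ at $h$. Substituting into (\ref{eq:lrv}) reproduces exactly $\sHL^2$ in (\ref{eq:hl var}). On the empirical side, $\hat{u}_n(\hat{h}_n)^2$ matches the denominator of $\hat\sigma_{1/2,n}^2$ verbatim. For the HAC sum I would observe that $\hat\psi_n(x) = \hat{h}_1(x,\hat{h}_n)$ modulo the discrepancy between $n^{-2}\sum_{i,j} \ind{(X_i+X_j)/2 \le \hat{h}_n}$ and $1/2$; this difference is $O(1/n)$ by the definition of the empirical $U$-quantile, and contributes only $o_P(1)$ to the HAC average since $\hat h_1$ is bounded and $b_n = o(\sqrt n)$.

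Finally, I would invoke Corollary \ref{cor:1}(B) with $p = 1/2$ to deliver the stated distributional limit. The main obstacle is already absorbed into the statements of Theorems \ref{theo:1} and \ref{theo:2}; the work specific to this corollary is essentially bookkeeping, with the only subtle point being the reconciliation of $\hat\psi_n$ with $\hat h_1$ evaluated at the empirical quantile $\hat h_n$.
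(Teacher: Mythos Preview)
Your proposal is correct and matches the paper's own approach: the paper does not give a separate proof of Corollary~\ref{cor:2} but simply states that the result is Corollary~\ref{cor:1}(B) specialized to $h(x,y,t)=\ind{(x+y)/2\le t}$ and $p=1/2$, with Assumption~\ref{ass:variation2} noted (in Remark~\ref{rem:1}) to hold automatically by Lipschitz continuity of $g$. Your additional bookkeeping on the $\hat\psi_n$ versus $\hat h_1(\cdot,\hat h_n)$ discrepancy is sound (the difference is a constant of order $O(1/n)$, so its contribution to the HAC sum is $O(b_n/n)=o(1)$) and makes explicit a point the paper leaves implicit.
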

\begin{remark} \label{rem:1}
It is desirable to translate Assumptions \ref{ass:variation} and \ref{ass:smooth} for this kernel $h$ into a set of easy-to-verify conditions on $F$. It is sufficient that $F$ possesses a Lebesgue density $f$ which satisfies the following three conditions:
\begin{enumerate}[(A)]
\item
$f$ is cadlag on $\R$,
\item \label{num:2}
$\displaystyle \sup_{-\infty < s < t < \infty} \left| \frac{f(t-) - f(s)}{t-s} \right| < \infty$, and
\item \label{num:3}
the support of $f$ (i.e.\ the closure of $\{ x | f(x) > 0 \}$) is a connected set or $f$ is symmetric around some point in $\R$.
\end{enumerate}
Assumption \ref{ass:variation2} is met for all distributions $F$ since the kernel $g(x,y) = (x+y)/2$ is Lipschitz continuous. The function $f$ being both a density and cadlag (right-continuous and left-hand side limits) on $\R$ implies that $f$ is bounded, hence $F$ is Lipschitz continuous, from which Assumption \ref{ass:variation} follows. Concerning Assumption \ref{ass:smooth}, $f$ being cadlag also implies that $f$ has at most countably many discontinuity points, which together with (\ref{num:2}) implies (\ref{eq:smooth}). Condition (\ref{num:2}) if fulfilled, e.g., if $f$ possesses a right-hand side derivative $f'$ everywhere, and $f'$ is cadlag. Note that the $U$-density $u$ in this case is up to re-scaling the convolution of $f$ with itself. Finally, either of the conditions of (\ref{num:3}) ensures that the $U$-density $u$ is non-zero in a neighborhood around its median.

\end{remark}


\section{Simulations}
\label{sec:sim}

We present Monte Carlo simulation results to investigate the size and power properties of the three tests proposed in the previous section. We have carried out simulations for several sample sizes, but the results presented are for $n = 240$ only. This sample size is large enough for the asymptotics to provide sensible approximations, and the picture is the same at other sample sizes as far as the comparison of the tests is concerned. Throughout, we use 1000 replications. We consider two different scenarios concerning the characteristics of the marginal distribution of the data generating process,
\begin{enumerate}[(A)]
\item symmetric data distributions, \label{scen.A}
\item \label{scen.B} skewed data distributions. The set-up will be such that a change in variance occurs along with the change in location.
\end{enumerate}
%
%
%
%
%
%
%
In {\bf scenario (\ref{scen.A})}, we generate data from the following general one-change-point model:
\[
 X_i = Y_i + \mu \ind{i > \lfloor \theta n \rfloor}, \qquad i = 1,\ldots,n,
\]
where $Y_i$, $i \in \Z$, is a stationary sequence, $\mu$ the jump height, and $\theta$ a jump location parameter. We use the following three marginal distributions for the process $(Y_i)_{i\in\Z}$: normal, $t_3$, and $t_1$. The $t_\nu$ distribution with parameter $\nu > 0$ has the density
	$f_\nu(x) =  \sqrt{\nu} B(\nu/2,1/2) ( 1 + x^2/\nu)^{-(\nu+1)/2}$,
where $B$ is the beta function. In order to make the jump sizes better comparable among the different marginal distributions, we scale the $t_\nu$ distribution such that the median (of the distribution) of $|Y_1|$ is the same as in the normal case, i.e., we multiply the $t_\nu$ realizations by $\gamma_\nu = z_{3/4}/t_{\nu;3/4}$, where $z_\alpha$ and $t_{\nu;\alpha}$ denote the $\alpha$-quantiles of the normal distribution and the $t_\nu$ distribution, respectively. 
Concerning the serial dependence of the sequences, we consider two cases:
\begin{enumerate}[({A}.1)]
\item independence, i.e., the $Y_i$, $i \in \Z$, are i.i.d.
\item AR(1), i.e., $Y_i = \gamma_\nu F_\nu^{-1} \{ \Phi(Z_i/\sqrt{1-\phi^2}) \}$, where the $Z_i$ fulfill the auto-regressive equation $Z_i = \phi Z_{i-1} + \epsilon_i$ with $\epsilon_i \sim N(0,1)$ i.i.d.\ and $\phi = 0.4$. Here, $F_\nu^{-1}$ denotes the quantile function of the $t_\nu$ distribution and $\Phi$ the cdf of the standard normal distribution.
Thus $(Y_i)_{i\in\Z}$ is a marginal transformation of a Gaussian AR process. It is again scaled such that median$(F_{|Y_1|}) = z_{3/4}$.
\end{enumerate}
	
In the independence case (A.1), the values of the long-run variances are
\[
	\sCS^2 = \var(Y_1) = 
	\begin{cases}
		1 													& \qquad \mbox{ for the normal distribution, } \\
		\gamma_\nu^2 \, \nu /(\nu-2) 				& \qquad \mbox{ for the $t_\nu$ distribution } (\nu > 2), \\
	\end{cases}
\]\[
	\sHL^2 = 
	\begin{cases}
		\pi/3 													& \qquad \mbox{ for the normal distribution, } \\
		\gamma_\nu^2  /( 3 u_\nu^2) 		& \qquad \mbox{ for the $t_\nu$ distribution,} \\
	\end{cases}
\]
where $u_\nu = 2\int_{-\infty}^\infty f_\nu^2(x) d x$. Explicit expressions are available for the convolution of a $t_\nu$-density with itself for odd integer $\nu$, see \citet{nadarajah:dey:2005}. We obtain $\sHL^2 = (2\pi/5)^2$ for $\nu = 3$ and $\sHL^2 = \pi^2/3$ for $\nu = 1$. Furthermore
\[
	\sMed^2 = 
	\begin{cases}
		\pi/2 																& \qquad \mbox{ for the normal distribution, } \\
		 \gamma_\nu^2  / \{ 4 f_\nu(0)^2\} 		& \qquad \mbox{ for the $t_\nu$ distribution.} \\
	\end{cases}
\]
In the AR(1) scenario (A.2), we have
\[
	\sCS^2 = (1+\phi)/(1-\phi)
\]
for normality. 
As for the $t_\nu$ distribution, we are not aware of an explicit expression for the moment correlation of a bivariate distribution characterized by a Gaussian copula and $t_\nu$ margins. We have furthermore
\[
	\sHL^2 = 
	\begin{cases}
		\frac{\pi}{3} + 4 \sum_{k=1}^\infty \arcsin \left( \frac{\phi^k}{2} \right) & \quad \mbox{ for the normal distribution, } \\
		(\gamma_\nu / u_\nu)^2 	\left\{ \frac{1}{3} + \frac{4}{\pi}	\sum_{k=1}^\infty \arcsin \left( \frac{\phi^k}{2}\right) \right\} & \quad \mbox{ for the $t_\nu$ distribution,} \\
	\end{cases}
\]\[
	\sMed^2 = 
	\begin{cases}
		\frac{\pi}{2} + 2 \sum_{k=1}^\infty \arcsin (\phi^k ) & \quad \mbox{ for the normal distribution, } \\
		\{ \gamma_\nu / f_\nu(0)\}^2 	\left\{ \frac{1}{4} + \frac{1}{\pi}	\sum_{k=1}^\infty \arcsin (\phi^k ) \right\} & \quad \mbox{ for the $t_\nu$ distribution.} \\
	\end{cases}
\]
We can thus study the behavior of the test statistics under the null and their respective long-run variance estimators individually. In the tables below, we distinguish three ways of dealing with the long-run variance. We use the 
\be \label{eq:bamm}
\begin{minipage}{0.8\textwidth}
\bit
\item[\ding{172}]
known values, 
\item[\ding{173}]
marginal variance estimates, and
\item[\ding{174}]
full long-run variance estimates.
\eit
\end{minipage}
\ee
Full long-run variance estimation adjusts for possible serial dependence, i.e., we use the estimators $\hsCS^2$, $\hsHL^2$ and $\hsMed^2$ as given by (\ref{eq:cs var est}), (\ref{eq:hl var est}) and (\ref{eq:med var est}), respectively. 
Marginal long-run variance estimation means we assume independence and only include the summand that corresponds to $k =0$ in the sums in (\ref{eq:cs var est}), (\ref{eq:hl var est}) and (\ref{eq:med var est}).
We take the following choices for bandwidths and kernels, 
\be \label{eq:choices}
	K(t) = \frac{3}{4} (1- t^2) \varind{[-1,1]}(t), 
	\ \ 
	W(t) = (1- t^2)^2 \varind{[-1,1]}(t),
  \  \
	d_n = I_n\, n^{-1/3}, 
	\ \ 
	b_n = 2 n^{1/3},
\ee
where $I_n$ denotes the sample interquartile range of the data points the kernel density estimator is applied to. The kernel $K$ above is known as Epanechnikov kernel, and $W$ as quartic kernel. 
The two kernels serve different purposes, $K$ is used for density estimation and must be scaled such that it integrates to 1, while $W$ is used for autocorrelation-consistent variance estimation and must be scaled such that $W(0) = 1$. These choices are ultimately arbitrary, but they have been shown to perform well in simulations over a wide range of scenarios. 
The results generally differ very little with respect to the choice of the kernel. 
We compute for each sample the test statistics $\TCS$, $\THL$, $\TMed$, divide them by the square root of the corresponding long-run variance estimate and count how often the thus adjusted test statistic exceeds the critical value 1.358, which is the 95\% quantile of the limiting distribution. Although  based on highly robust estimators, the test statistics $\THL$ and $\TMed$ are susceptible to outliers. Problems can arise when several extreme values occur at the beginning of the sequence. In order to improve the robustness of the tests, we apply an ad-hoc fix and simply exclude the first 10 values from the sequences of successive estimates before taking the maximum.

\begin{table}[t]
\small
\caption{{\it Test size.} 
Rejection frequencies (\%) at the asymptotic 5\% significance level of the CUSUM test (\ref{eq:cs}), the Hodges--Lehmann test (\ref{eq:hl test}), and the median test (\ref{eq:med}) under no change. Marginal distributions: normal, $t_3$, and $t_1$; dependence scenarios: independence and AR(1) with parameter $\phi= 0.4$; sample size $n = 240$; 1000 runs. Long-run variance estimation: \ding{172}, \ding{173}, \ding{174}, cf.~(\ref{eq:bamm}). }
\renewcommand{\arraystretch}{1.0}
\medskip
\centering
\begin{tabular}{cc||rrr|rrr|rrr}
\multicolumn{2}{r@{\quad}||}{ test: } & \multicolumn{3}{c|}{ CUSUM } & \multicolumn{3}{c|}{ Hodges--Lehmann } & \multicolumn{3}{c}{ median }  \\
\multicolumn{2}{r@{\quad}||}{ long-run variance: } 
	& \quad\ding{172}\ {} & \quad\ding{173}\ {} & \quad\ding{174}\ {}
	& \quad\ding{172}\ {} & \quad\ding{173}\ {} & \quad\ding{174}\ {} 
	& \quad\ding{172}\ {} & \quad\ding{173}\ {} & \quad\ding{174}\ {} \\[1.0ex]
\hline \hline 
independent & normal     & 4 & 4 & 3 & 3 & 3 & 3 & 9 & 8 & 8 \\ 
data 				&	$t_3$      & 5 & 3 & 2 & 4 & 4 & 2 & 8 & 7 & 8 \\ 
						& $t_1$        &   &  1 &  1 &  7 &  6 &  5 & 10 &  8 & 10 \\[1.0ex] 						
\hline 
AR(1)				& normal     &  4 & 31 &  3 &  4 & 30 &  3 &  8 & 27 &  8 \\ 
$\phi = 0.4$&	$t_3$        &   & 26 &  3 &  4 & 30 &  3 &  9 & 29 & 10 \\ 
						& $t_1$        &   &  6 &  0 &  8 & 34 &  5 &  9 & 26 &  8 \\ 
\end{tabular} \label{tab:1}
\end{table}
\begin{figure}[ht]
\centering
\includegraphics[width=0.9\textwidth]{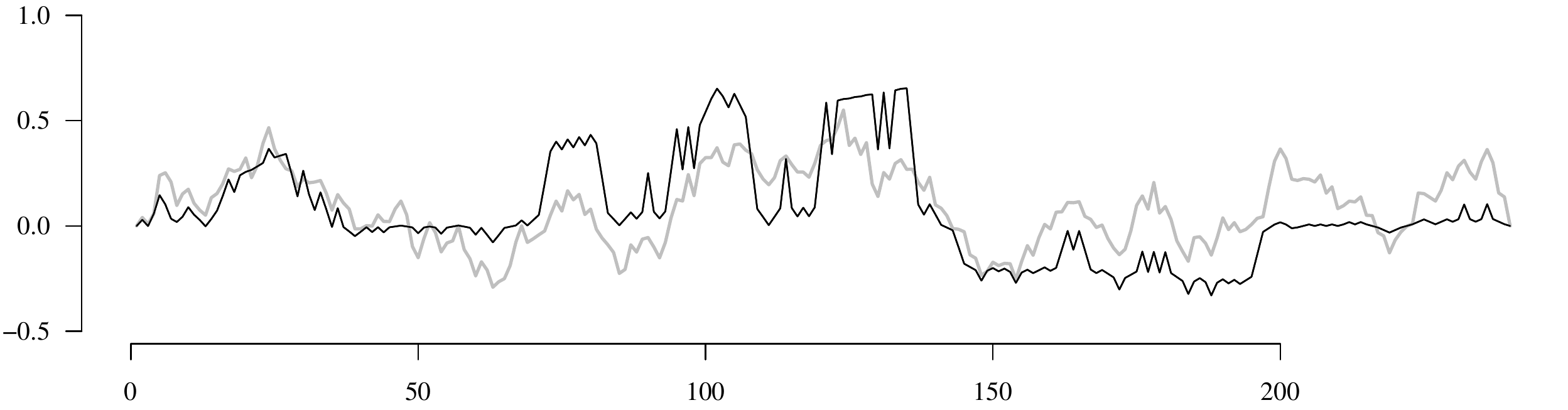}
\caption{A typical trajectory of the change-point process of the median test $(k n^{-1/2} \hat\sigma_{{\rm Med},n}^{-1} (\hat{m}_k - \hat{m}_n))_{k = 1,\ldots,n}$ (black) and the analogue for the CUSUM test (gray) for $n = 240$ independent standard normal observations.} \label{fig:path}
\end{figure}
%
%
%
%
\paragraph{\it Analysis of size.} The results for the size of the tests are summarized in Table~\ref{tab:1}. We observe the following:
\begin{enumerate}[(1)]
\item The CUSUM test and the test based on the Hodges--Lehmann estimator (referred to as Hodges--Lehmann test in the following) keep the nominal size of 5\% for the normal and the $t_3$ distribution under independence as well as dependence, but appear to be slightly conservative. 
\item
The median test shows a substantial size distortion in all situations, also when the test statistic is adjusted by the true variance. It persists also for considerably larger $n$. This size distortion in line with results reported in Shao and Zhang (2012), for which the self-normalization approach proposed by the authors does not provide a remedy either. 
This behavior may be described as a discretization problem in finite samples: the $\hat{m}_k$, $k = 1,\ldots, n$, take on only a small number of distinct values. The resulting paths differ strongly from the paths of a Brownian bridge also for large samples ($n > 1000$), and the distribution of the supremum very slowly approaches its limit. In principle, this discretization applies also to the Hodges--Lehmann test, but to a negligible extent. For $n = 240$, the Hodges--Lehmann estimator is the median of about 30,000 (in case of continuous models) distinct values. A typical trajectory of the median change-point process of a standard normal i.i.d.\ sample with estimated long-run variance is depicted in Figure~\ref{fig:path}.
Due to the size distortion, the median test is excluded from the detailed power considerations in the following. In summary, it has a power comparable to that of the Hodges--Lehmann test when not corrected for size, but when corrected for size, it has in all situations, a much lower efficiency.
\item
As expected, the marginal variance estimation fails in the AR(1) case. Ignoring the serial dependence leads to clearly wrong results.
\end{enumerate}
\begin{table}[ht]
\small
\caption{{\it Test power under independence.} 
Rejection frequencies (\%) at the asymptotic 5\% significance level of the CUSUM test (\ref{eq:cs}), the Hodges--Lehmann test (\ref{eq:hl test}), and the median test (\ref{eq:med}) under one-jump alternatives for independent errors. Data distributions: normal, $t_3$, and $t_1$; Sample size $n = 240$; 1000 runs.
Long-run variance estimation: \ding{172}, \ding{173}, \ding{174}, cf.~(\ref{eq:bamm}). } 
\renewcommand{\arraystretch}{1.0}
\medskip
\centering
\begin{tabular}{ccc||rrr|rrr}
\multicolumn{3}{r@{\quad}||}{ test: } & \multicolumn{3}{c|}{ CUSUM } & \multicolumn{3}{c}{ Hodges--Lehmann }  \\
\multicolumn{3}{r@{\quad}||}{ long-run variance: } 
	& \quad\ding{172}\ {} & \quad\ding{173}\ {} & \quad\ding{174}\ {}
	& \quad\ding{172}\ {} & \quad\ding{173}\ {} & \quad\ding{174}\ {} \\[1.0ex]
	 jump:   &  location & height 	& \multicolumn{3}{c|}{  } & \multicolumn{3}{c}{  } \\[1ex]
\hline \hline 
normal     &   1/2 	&   1/4 	&  38 	&  37 &  29 	& 38 	&  36 &  29 \\ 
 data           		&    		&    1/2	&  94 &  93 	& 86 	&  93 &   92 &   84 \\ 
           &     		&    1 		&  100 	&  100&  100 	& 100 & 100 &  100 \\ 
           &   3/4 	&   1/4 	&  19 	&  19 &  12 	& 19 	&  18 &  11 \\ 
           &  			&   1/2 	&  75 	&  75 &  49 	& 74 	&  71 &  46 \\ 
           &    		&   1 		& 100 	& 100 & 100 	& 100 & 100 &  98 \\[1ex] 						
\hline 
$t_3$ data &   1/2 	&   1/4 	&  16 	&  19 &  14 	&  31 &  29 &  22 \\ 
           &     		&    1/2 	&   57 	&  63 &   51 	&  86 &  85 &   75 \\ 
           &     		&    1 		&  100 	&  98 &   96 	& 100 & 100 &  100 \\ 
           &   3/4 	&   1/4 	&   8 	&   9 &   6 	&  18 &  15 &   9 \\ 
           &    		&   1/2 	&  33 	&  38 &  22 	&  65 &  62 &  39 \\ 
           &    		&   1 		&  95 	&  93 &  81 	& 100 & 100 &  95 \\[1ex] 						
\hline 
$t_1$ data &   1/2 	&   1/4 	&      &   10 &   10 	&  31 &  25 &  18 \\ 
           &    		&    1/2 	&      &    2 &    2 	&  81 &  74 &   58 \\ 
           &    		&    1 		&      &    9 &    6 	& 100 & 100 &   99 \\ 
           &   3/4 	&   1/4 	&      &   10 &   10 	&  18 &  13 &  10 \\ 
           &    		&   1/2 	&      &   2 	&   10 	&  60 &  52 &  28 \\ 
           &    		&   1 		&      &   3 	&   2 	&  99 &  97 &  73 \\ 						
\end{tabular} \label{tab:2}
\end{table}
\begin{table}[ht]
\small
\caption{{\it Test power for AR(1) with $\phi = 0.4$.} 
Rejection frequencies (\%) at the asymptotic 5\% significance level of the CUSUM test (\ref{eq:cs}), the Hodges--Lehmann test (\ref{eq:hl test}), and the median test (\ref{eq:med}) under one-jump alternatives with AR(1) errors. Marginal data distributions: normal, $t_3$, and $t_1$; sample size $n = 240$; 1000 runs.
Long-run variance estimation: \ding{172}, \ding{174}, cf.~(\ref{eq:bamm}). } 
\renewcommand{\arraystretch}{1.0}
\medskip
\centering
\begin{tabular}{ccc||rr|rr}
		%
\multicolumn{3}{r@{\quad}||}{ test: } & \multicolumn{2}{c|}{ CUSUM } & \multicolumn{2}{c}{ Hodges--Lehmann }  \\
\multicolumn{3}{r@{\quad}||}{ long-run variance: } 
	& \qquad\ding{172}\ {} & \qquad\ding{174}\ {}
	& \qquad\ding{172}\ {} & \qquad\ding{174}\ {} \\[1.0ex]
	 jump:   &  location & height 	& \multicolumn{2}{c|}{  } & \multicolumn{2}{c}{  } \\[1ex]		
\hline \hline 
normal     &   1/2 	&   1/4 	&  17 &    14 &  18 	&   13 \\ 
 data      &    		&    1/2 	&  57 &    47 &   56 	&   45 \\ 
           &     		&    1 		&  99 &    98 &   99 	&   98 \\[1ex] 
           &   3/4 	&   1/4 	&   9 &     6 &   9 	&   6  \\ 
           &   			&   1/2 	&  35 &    20 &  34 	&   17 \\ 
           &    		&   1 		&  94 &    77 &  94 	&   71 \\[1ex] 						
\hline  
$t_3$ data &   1/2 	&   1/4 	&      &   7 	&  14 	&   10 \\ 
           &    		&    1/2 	&      &   24 &  51 	&   37 \\ 
           &     		&    1 		&      &   79 &  99 	&   92 \\[1ex] 
           &   3/4 	&   1/4 	&      &    3 &   8 	&    6 \\ 
           &    		&   1/2 	&      &   10 &  29 	&   14 \\ 
           &    		&   1 		&      &   44 &  90 	&   57 \\[1ex] 						
\hline  
$t_1$ data &   1/2 	&   1/4 	&      &   10 &  16 	&    11 \\ 
           &    		&    1/2 	&      &    2 &   46 	&    28 \\ 
           &    		&    1 		&      &    3 &   97 	&    79 \\[1ex] 
           &   3/4 	&   1/4 	&      &   10 &  13 	&     7 \\ 
           &    		&   1/2 	&      &   10 &  25 	&    12 \\ 
           &    		&   1 		&      &   10 &  76 	&    29 \\ 					
\end{tabular} \label{tab:3}
\end{table}

\paragraph{\it Analysis of power.} In Table~\ref{tab:2}, power results in the independence scenario (A.1) for several alternatives are given. We consider jump heights $\mu = 1/4, 1/2, 1$ and jump locations $\theta = 1/4, 1/2, 3/4$. The results for $\theta = 1/4$ are similar to those for $\theta = 3/4$ and not reported here. We find from Table~\ref{tab:2}:
\begin{enumerate}[(1)]
\item
The CUSUM test has, as expected, no power at the $t_1$ distribution. Since the second moments of the $t_1$ distribution are infinite, neither the CUSUM test statistic nor the long-run variance estimator $\hsCS^2$ converges.  
\item
The CUSUM test and the Hodges--Lehmann test perform very similarly at the normal distribution, with minor advantages for the CUSUM test. The Hodges--Lehmann test is clearly more efficient at the $t_3$ distribution and has still good power at the $t_1$ distribution. 
\item
By comparing the power of the tests with known variance and with estimated variance, we find that, although a change in location generally increases the variance estimate, thus decreasing the power of the test, this effect is rather small in case of the marginal variance estimation, cf.\ columns \ding{173}.  The marginal variance estimation provides an upper bound on what might be possibly gained by a sophisticated, data adaptive selection of the bandwidth $b_n$. 
\end{enumerate} 
In Table~\ref{tab:3}, power results for the AR(1) scenario (A.2) are given with the same choices of the parameters $\mu$ and $\theta$ and the same marginal distributions as in Table~\ref{tab:2}. All tests have a lower power in the presence of positive autocorrelations, but the conclusions concerning the rankings of the tests are the same as in the independent case.

The data generating process in {\bf scenario (\ref{scen.B})} is similar to that in scenario (\ref{scen.A}). The data follow the one-change-point model
\[
	X_i = 
	\begin{cases}
		Y_i, 													& \qquad 1 \le i \le \lfloor \theta n \rfloor, \\
		Y_i/\lambda_2,						 		& \qquad \lfloor \theta n \rfloor  + 1 \le i \le n, \\
	\end{cases}
\]
where the $Y_i$, $i \in \Z$, are exponentially distributed with parameter $\lambda = 1$. Instead of a change in the central location of a symmetric distribution we consider now a change in the parameter $\lambda$ of the exponential distribution, which implies a change in the variability along with the change in the location. The set-up is inspired by the river Elbe discharge data example in Section \ref{sec:data}, which, as a referee has pointed out, exhibits such features. To give an impression how the tests perform in such a situation, we only consider independent observations and and a change in the middle of the observed period, i.e., $\theta = 1/2$. The kernel and bandwidth choices for the long run variance estimation are as in scenario (A). We use as before $n = 240$ observations and 1000 repetitions. For i.i.d.\ sequences $Y_i$, $i \in \Z$, of $\mathrm{Exp}(1)$, we have 
	$E(Y_1) = 1$, $m = {\rm median} (Y_1) = \log(2)$, and $\sCS^2 = \sMed^2 = \var(Y_1) = 1$,
Furthermore, the population value of the Hodges--Lehmann estimator $h$ is the solution to $2(1+2 h) = e^{2h}$, and $\sHL^2 = \{3-(2h-1)^2\}/(2 h)^2$. The empirical rejection probabilities of the three tests under scenario (\ref{scen.B}) for several values of $\lambda_2$ are given in Table~\ref{tab:4}. We find that, as in scenario (\ref{scen.A}) under normality, the CUSUM test and the Hodges--Lehmann test behave similarly, and appear to equally well detect changes in the location if a change in variance occurs at the same time. Here we include also power results for the median test and note that it has a similar power as the other tests but clearly exceeds of the nominal 5\% level under the null. 

\begin{table}[t]
\small
\caption{{\it Exponential distribution.} 
Rejection frequencies (\%) at the asymptotic 5\% significance level of the CUSUM test (\ref{eq:cs}), the Hodges--Lehmann test (\ref{eq:hl test}), and the median test (\ref{eq:med}) at independent exponentially distributed observations; The parameter $\lambda$ changes from 1 in the 1st half to $\lambda_2$ in the 2nd half; sample size $n = 240$; 1000 runs. Long-run variance estimation: \ding{172}, \ding{173}, \ding{174}, cf.~(\ref{eq:bamm}). }
\renewcommand{\arraystretch}{1.0}
\medskip
\centering
\begin{tabular}{c||rrr|rrr|rrr}
\multicolumn{1}{r@{\quad}||}{ test: } & \multicolumn{3}{c|}{ CUSUM } & \multicolumn{3}{c|}{ Hodges--Lehmann } & \multicolumn{3}{c}{ median }  \\
\multicolumn{1}{r@{\quad}||}{ long-run variance: } 
	& \quad\ding{172}\ {} & \quad\ding{173}\ {} & \quad\ding{174}\ {}
	& \quad\ding{172}\ {} & \quad\ding{173}\ {} & \quad\ding{174}\ {} 
	& \quad\ding{172}\ {} & \quad\ding{173}\ {} & \quad\ding{174}\ {} \\[1.0ex]
mean 2nd half $1/\lambda_2$ & & & & & & & &  & \\ 
\hline
\hline
	1          		&   4 &  3 &  3 						&  5 &  5 &  4 						& 11 	&  8 &  9 \\ 
	4/5       		& 		& 29 & 22 						&    & 32 & 27 						&    	& 32 & 32 \\ 
 2/3       		&  		& 78 & 66 						&    & 77 & 70 						&    	& 64 & 59 \\ 
	1/2        		&     & 100 &  98 					&     & 100 &  99 				&     &  97 &  92 \\ 
 1/3       		&     & 100 & 100 					&     & 100 & 100 				&    	& 100 & 100 \\ 
\end{tabular} \label{tab:4}
\end{table}


\section{Data examples}
\label{sec:data}
We consider two data sets, both from hydrology: the maximum annual discharge of the river Elbe at Dresden and the annual rainfall in Argentina. 

The first data set has recently been analyzed by \citet{sharipov:tewes:wendler:2014}. It consists of the annual maximum discharge of the river Elbe at Dresden, Germany, in the years 1851 to 2012. The time series is depicted in Figure~\ref{fig:dd}.  There appears to be shift in the time series around the year 1900, with the annual maximum discharge being lower on average afterwards. Industrialization and infrastructural development at the end of the 19th century led to a significant discharge of industrial sewage in to the river Elbe upstream from Dresden, making the river less prone to freezing in winter, resulting in lower spring floods. 
\begin{figure}[t]
	\centering
	\includegraphics[width=\textwidth]{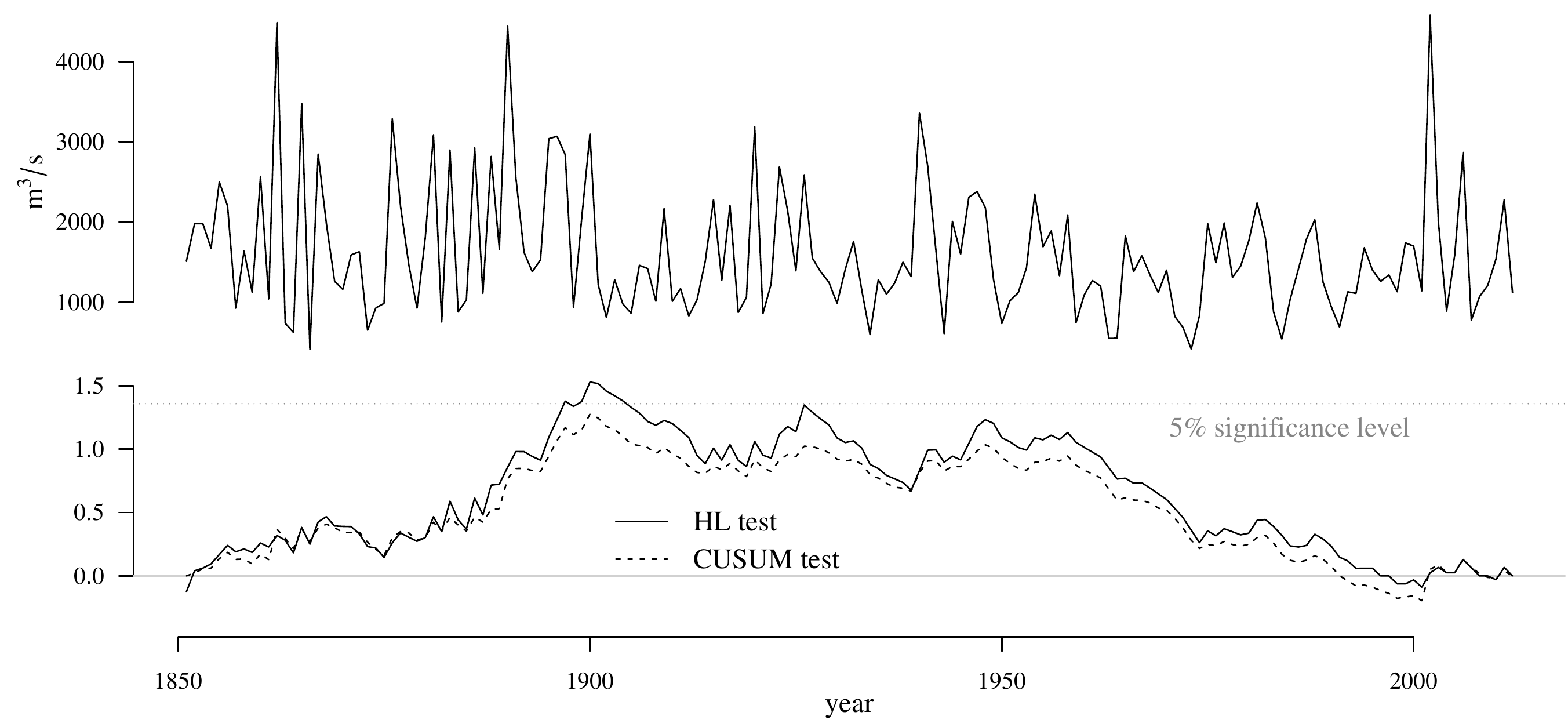}
	\caption{
		Top: Maximum yearly discharge (in cubic meter per second) of the river Elbe at Dresden from 1851 until 2012 ($n = 162$). 
		Bottom: Change-point processes 
		$(k n^{-1/2} \hsHL^{-1} (\hat{h}_k - \hat{h}_n))_{k = 1,\ldots,n}$ (solid line) and 
		$( k n^{-1/2} \hsCS^{-1}(\bar{X}_k - \bar{X}_n)_{k = 1,\ldots,n}$ (dashed line). 
	} 
	\label{fig:dd}
\end{figure}
The series is clearly non-normal, cf.\ Figure~\ref{fig:qq} (left). It exhibits a heavy upper tail, with three extreme floods in 1862, 1890, and 2002. Extreme events tend to dominate any moment based analysis such as the CUSUM test, potentially obscuring the visible change in the central location. 
Applying the CUSUM and the Hodges--Lehmann test with the choices for $K$, $W$, $d_n$, and $b_n$ as in the simulations section, cf.\ (\ref{eq:choices}), we observe that both change-point processes, i.e., $(k n^{-1/2} \hat\sigma_{{\rm HL},n}^{-1} (\hat{h}_k - \hat{h}_n))_{k = 1,\ldots,n}$ and 
$( k n^{-1/2} \hsCS^{-1}(\bar{X}_k - \bar{X}_n))_{k = 1,\ldots,n}$, which are depicted in the lower plot of Figure~\ref{fig:dd}, look similar and take their maxima at 1900. However, the test decision at the 5\% significance level is different: contrary to the Hodges--Lehmann test, the CUSUM test does not reject the hypothesis of no change. 
However, with $b_n = 2 n^{1/3}$ the HAC bandwidth is chosen rather large, while a look at the sample autocorrelations suggests that it is legitimate to treat the observations as independent.
When excluding the autocovariances from the long-run variance estimation, both tests consistently reject the null hypothesis. The heavy tail renders the CUSUM test inefficient, making the test outcome at the 5\% level sensitive to the choice of tuning parameters, whereas the Hodges--Lehmann test clearly detects the change, regardless of the choice of $b_n$. 
With the average yearly maximum discharge, the variability of the time series decreases. The simulation results of scenario (\ref{scen.B}) in the previous section suggest that the CUSUM as well as the Hodges--Lehmann test are valid in such a situation.

The second example is the Argentina rainfall data that has previously been analyzed in a change-point context by \citet{wu:woodroofe:mentz:2001} and \citet{shao:zhang:2010}. Also in this example, there is evidence (a dam built from 1952 to 1962) that supports the assumption of a change in the central location. The series is depicted in Figure~\ref{fig:ar}. The normal quantile plot (Figure~\ref{fig:qq}, right) reveals a fair agreement with normality, and in fact the Hodges--Lehmann and the CUSUM test behave similarly with both processes attaining their minima at 1955. Both reject the null hypothesis at the 5\% level for $b_n = n^{1/3}$, cf.~Figure~\ref{fig:ar}.
\begin{figure}[t]
	\centering
	\includegraphics[width=\textwidth]{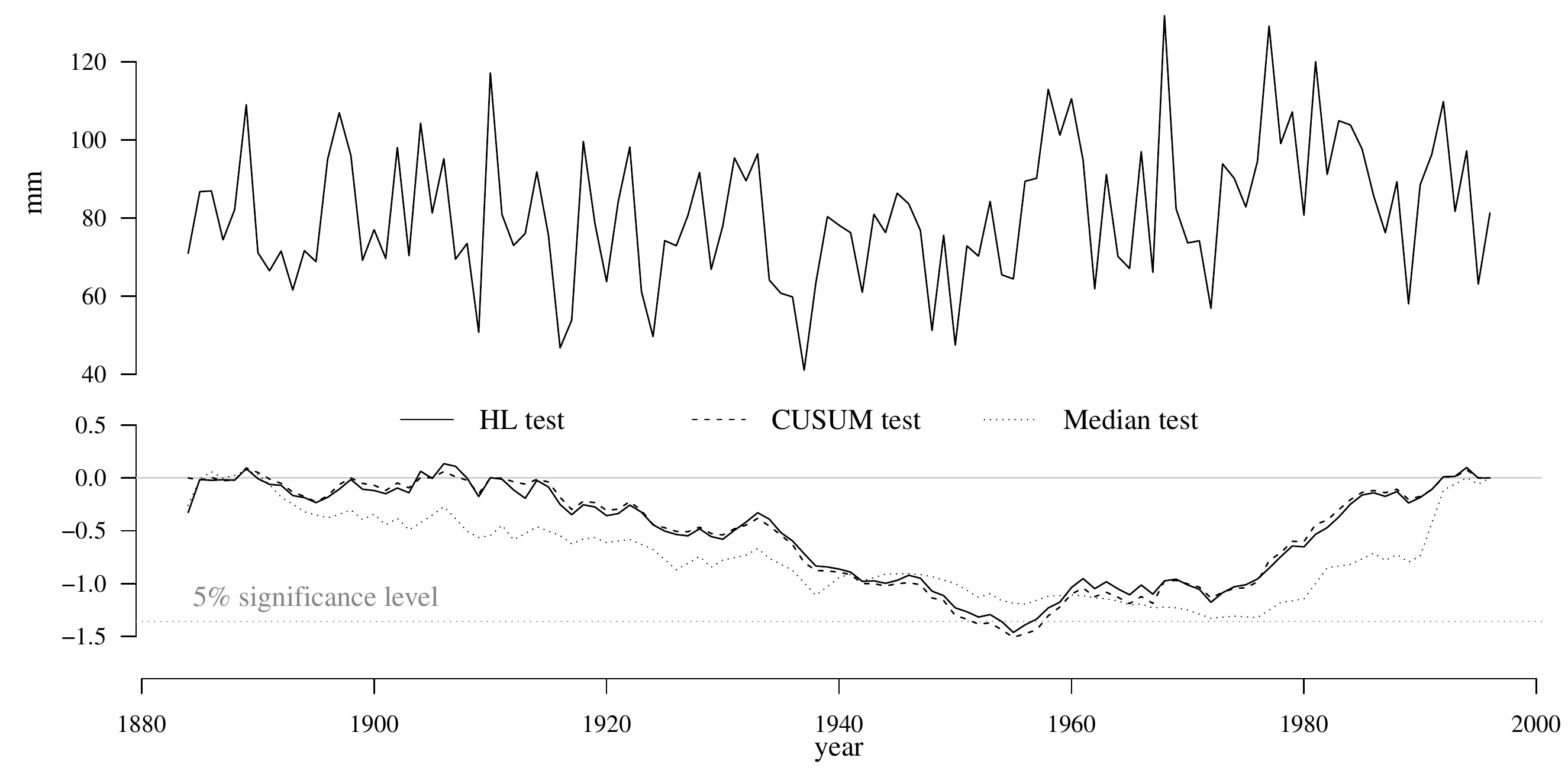}
	\caption{
		Top: Yearly rainfall (in millimeters) in Argentina from 1884 until 1996 ($n = 113$). 
		Bottom: Change-point processes 
		$(k n^{-1/2} \hsHL^{-1} (\hat{h}_k - \hat{h}_n) )_{k = 1,\ldots,n}$ (solid line),
		$( k n^{-1/2} \hsCS^{-1} (\bar{X}_k - \bar{X}_n))_{k = 1,\ldots,n}$ (dashed line), and
		$(k n^{-1/2} \hsMed^{-1} (\hat{m}_k - \hat{m}_n) )_{k = 1,\ldots,n}$ (dotted line).
		}
	\label{fig:ar}
\end{figure}
Following the analysis of \citet{shao:zhang:2010}, we also apply the median-based test to this data example (dotted line in Figure~\ref{fig:ar}). The median test does not reject the hypothesis of no change. This is in apparent contradiction to the analysis by \citet{shao:zhang:2010}, who report a p-value of less than 0.001. The authors apply a self-normalized version of the test, but since self-normalization tends to decrease the power, this is unlikely to be responsible for the different results. We suspect that \citet{shao:zhang:2010} applied the median-based test in the same manner as the CUSUM test, restricting the location of the potential change-point to the years 1952--1962, making the test largely resemble a two-sample test.
		
\begin{figure}[t]
\centering
\includegraphics[width=0.85\textwidth]{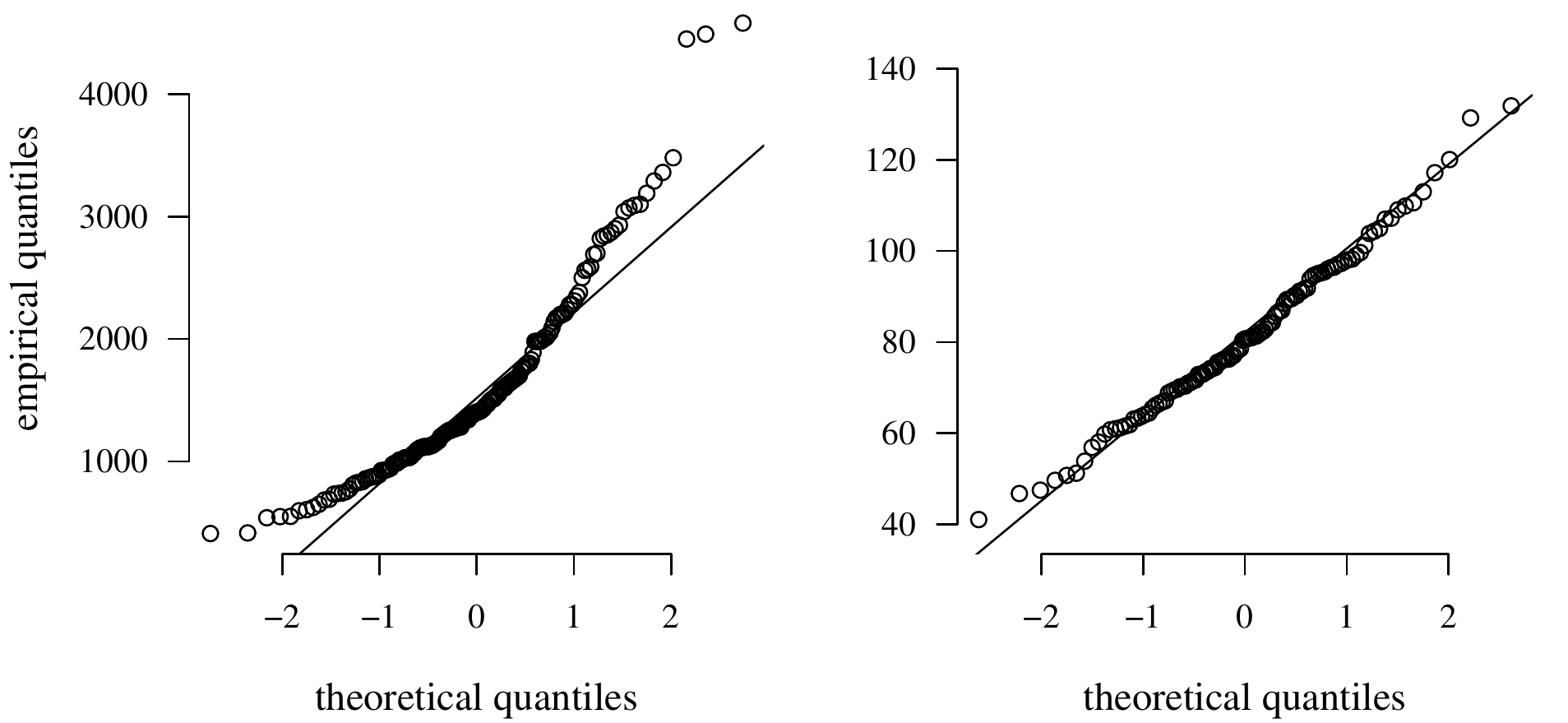}
\caption{Normal quantile plots for the River Elbe discharge data (left) and the Argentina rainfall data (right).}
\label{fig:qq}
\end{figure}


\section{Summary and discussion}

We have proved a functional limit theorem for the general $U$-quantile process for short-range dependent data. We have furthermore established the consistency of an HAC kernel estimator for the long-run variance. The results are formulated under very mild conditions on the data. We use near epoch dependence in probability (\PNED) on mixing sequences to capture the short-range dependence, which does do not imply any moment condition.

As an application of the theory, we examine the properties of a new change-point test for location. The test is of the plug-in type, obtained from the classical CUSUM test by replacing the mean by the Hodges--Lehmann estimator. It is demonstrated by simulations and also mediated by the two data examples that the Hodges--Lehmann test outperforms the CUSUM test at heavy-tailed data and significantly reduces the potential harm of gross errors, but essentially behaves as the CUSUM test under normality. We show that the Hodges--Lehmann estimator is clearly to be preferred over the median for this purpose. A drawback of the Hodges--Lehmann test is the higher computational cost, but this has become negligible with the use of computers. 

The problem of robust univariate location estimation is well studied with \citet{Huber1964} being one of the main contributions, and there are other robust estimators that might perform comparably to the Hodges--Lehmann estimator in this context. See, e.g.\ \citet[][Chapters 3 \& 4]{Huber2009} for an overview on robust location estimation. However, besides its good statistical properties, the Hodges--Lehmann estimator possesses an intriguing conceptual simplicity: there are no weight functions, trimming percentages, tuning constants, etc., to choose.
Furthermore, a thorough mathematical analysis of robust estimators generally tends to be elaborate, and the literature on functional limit theorems for such estimators is rather limited. \citet{jureckova:sen:1981a,jureckova:sen:1981b} are works in this direction, but we are not aware of any results for dependent data. 

A certain reservation towards the use of robust estimators in general stems from
the strong focus on moment characteristics as descriptive parameters of distributions. For instance, the mean is widely used to describe the central location, and any alternative location measure, such as the median or the Hodges--Lehmann estimator, coincides with the mean only under some restrictive assumptions on the data distribution (e.g.\ symmetry). This objection against the use of robust estimators is of much lesser legitimacy for two-sample or change-point tests. If we consider explicitly the change-point model described in the introduction, where the observations before and after the change-point differ only by a shift, but otherwise follow the same distribution, this shift is picked up equally by any proper, translation equivariant location measure, and one is hence free to make the choice solely based on the statistical properties of the estimators.


\appendix

\section{Proof of Theorem \ref{theo:1}}
\label{sec:appA}

All throughout the Appendix, we use $C$ as generic notation for a constant. Its value may change from line to line, but it is always independent of $n$ and all other indices involved in the respective statement. Further, we write $\left\|\cdot\right\|_{p}=\left(E|\cdot|^p\right)^{1/p}$ for the $L_p$-norm of a random variable. In Appendix \ref{sec:appA} we prove Theorem \ref{theo:1}. Appendix \ref{sec:appB} is devoted to the proof of Theorem \ref{theo:2}.

We start by gathering several important auxiliary results from the literature which are stated here without proof.
The following weak invariance principle for $U$-statistics is a variant of Theorem 2.5 of \citet{dehling:vogel:wendler:wied:2015:vs4} for bounded kernels.
\citet{dehling:vogel:wendler:wied:2015:vs4} state the invariance principle for unbounded kernels, assuming $(2+\delta)$-moments. The bounded case can be proved in the same way, so we omit the proof.

\begin{proposition}\label{theo1} Under Assumptions 1, 2, and 3, the $U$-statistic process
\begin{equation*}
\left(\frac{[ns]}{\sqrt{n}}\left(U_{[ns]}(U^{-1}(p))-p\right)\right)_{s\in[0,1]}
\end{equation*}
converges weakly in $D[0,1]$ to $\sigma W$, there $W$ is a standard Brownian motion, and
\begin{equation*}
\sigma^2= 4 \sum_{r=-\infty}^\infty \cov\left(h_1(X_0,U^{-1}(p)),h_1(X_r,U^{-1}(p))\right).
\end{equation*}
\end{proposition}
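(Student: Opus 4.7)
The plan is to split $U_{[ns]}(t_0) - p$ with $t_0 := U^{-1}(p)$ via the Hoeffding decomposition (\ref{eq:hoeffding}) into a linear and a degenerate part, giving
\[
\frac{[ns]}{\sqrt n}\bigl(U_{[ns]}(t_0)-p\bigr) \;=\; L_n(s) + R_n(s),
\]
where $L_n(s) = \tfrac{2}{\sqrt n}\sum_{i=1}^{[ns]} h_1(X_i,t_0)$ is the linear part and $R_n(s) = \tfrac{2}{\sqrt n\,([ns]-1)} \sum_{1\le i<j\le [ns]} h_2(X_i,X_j,t_0)$ is the degenerate remainder. I would then prove $L_n \cid \sigma W$ in $D[0,1]$ and $\sup_{s\in[0,1]} |R_n(s)| \cip 0$, and combine them by Slutsky's lemma in the Skorokhod space.

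For the linear part, $h_1(\cdot,t_0)$ is bounded (since $h$ is) and centered, and the variation condition (Assumption~\ref{ass:variation}) together with the \PNED\ approximation rate transfers the \PNED\ property from $(X_i)_{i\in\Z}$ to the stationary sequence $(h_1(X_i,t_0))_{i\in\Z}$. Absolute summability of $\cov(h_1(X_0,t_0),h_1(X_r,t_0))$ is then guaranteed by $\sum_k k\beta_k<\infty$ together with boundedness, identifying the candidate variance $\sigma^2$. A stationary CLT for bounded \PNED-on-absolutely-regular sequences (Cramér--Wold + stationary increments) delivers finite-dimensional convergence, while a Rosenthal-type moment inequality of the form $E|L_n(s')-L_n(s)|^4 \le C(s'-s)^2$, available for bounded weakly dependent sums, supplies tightness through the standard moment criterion for $D[0,1]$.

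The hard part is $\sup_{s\in[0,1]} |R_n(s)| \cip 0$. The key is the pointwise second-moment bound
\[
E\Bigl(\sum_{1\le i<j\le m} h_2(X_i,X_j,t_0)\Bigr)^{\!2} \;\le\; C m^2,
\]
which in the iid case is immediate from degeneracy $E[h_2(x,X_0,t_0)]=0$, but here requires expanding the square into a quadruple sum, coupling each $X_i$ to its absolutely regular approximant $f_l(Z_{i-l},\ldots,Z_{i+l})$ via Assumption~\ref{ass:variation} at a suitable rate, and applying the standard $\beta$-mixing covariance inequality to the resulting nearly independent blocks. The condition $\sum_k k\beta_k<\infty$ makes the off-diagonal contributions summable, so only the ``two close pairs'' index configurations contribute at order $m^2$. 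A dyadic chaining argument then lifts this to the maximal bound $E\max_{1\le m\le n}\bigl(\sum_{1\le i<j\le m} h_2(X_i,X_j,t_0)\bigr)^2 = O(n^2)$, from which $\sup_{s\in[0,1]} |R_n(s)| = O_p(n^{-1/2})$. This combinatorial bookkeeping of the degenerate quadruple sum under coupling is the only substantive ingredient; the rest reduces to off-the-shelf weakly dependent FCLT machinery, which is why the authors regard the present statement as a minor variant of Theorem~2.5 of \citet{dehling:vogel:wendler:wied:2015:vs4}, simplified here by boundedness replacing the $(2+\delta)$-moment assumption used there.
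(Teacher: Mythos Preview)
Your approach is correct and is precisely the one the paper has in mind: it omits the proof here and simply points to Theorem~2.5 of \citet{dehling:vogel:wendler:wied:2015:vs4}, whose argument is exactly Hoeffding decomposition plus FCLT for the linear part plus negligibility of the degenerate part, with boundedness replacing the $(2+\delta)$-moment hypothesis. Two small technical adjustments align your sketch with what the referenced proof actually does: first, the near epoch dependence is not transferred as \PNED\ but upgraded to $L_2$-NED for the bounded sequence $(h_1(X_i,t_0))_{i\in\Z}$ via Lemma~\ref{lem1}, after which the standard $L_2$-NED invariance principle applies; second, the degenerate-part bound available under \PNED\ on absolutely regular sequences is the one in Lemma~\ref{lem6}, namely $\bigl\|\max_{m\le 2^k}\sum_{i<j\le m} h_2(X_i,X_j,t_0)\bigr\|_2 \le C\,2^{5k/4}k$, i.e.\ $E Q_m^2 = O(m^{5/2})$ rather than the $O(m^2)$ you claim---the sharper iid-type rate does not obviously survive the coupling step from $X_i$ to $f_l(Z_{i-l},\ldots,Z_{i+l})$, but the weaker rate still yields $\sup_{s\in[0,1]}|R_n(s)| = O_p(n^{-1/4}\log^2 n) = o_p(1)$, which is all you need.
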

We will approximate $U$-quantiles by $U$-statistics and will make repeated use of  $U$-statistic results. 
Similarly to (\ref{eq:hoeffding}), we can define the Hoeffding decomposition of the kernel $g$, and define $g_1(x) = E g(x,Y) - E g(X,Y)$, where $X$, $Y \sim F$ are i.i.d.\ and $F$ is the marginal distribution of the process $(X_i)_{i \in \Z}$. The following lemma is the analogue of Lemma A.2 of \citet{dehling:vogel:wendler:wied:2015:vs4} for bounded kernels $g$.
\begin{lemma}\label{lem1} 
Let $(X_n)_{n\in\Z}$ be a stationary and $P$-near epoch dependent process on $(Z_n)_{n\in\Z}$ with approximating constants $a_l$ and non-increasing function $\phi$. Let further $g$ be a bounded, symmetric kernel satisfying the variation condition (Assumption \ref{ass:variation2}).
If there is a sequence of positive numbers $(s_l)_{l\in\N}$ such that $a_l\phi(s_l)=O(s_l)$, then the sequence $(g_1(X_n))_{n\in\Z}$ is $L_2$-NED on $(Z_n)_{n\in\Z}$, and the approximation constants satisfy $a_{l,2}=O(s_l^{1/2})$.
\end{lemma}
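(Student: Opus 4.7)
The plan is to build an explicit $L_2$-approximation of $g_1(X_0)$ by a $\sigma(Z_{-l},\ldots,Z_l)$-measurable random variable with the desired rate. By stationarity it suffices to treat $n=0$. The $P$NED hypothesis supplies the approximation $X_0^{(l)}=f_l(Z_{-l},\ldots,Z_l)$, and the natural candidate for the approximation of $g_1(X_0)$ is simply $g_1(X_0^{(l)})$. Since the $L_2$-projection onto $\sigma(Z_{-l},\ldots,Z_l)$ is the best mean-square approximation, any upper bound on $\|g_1(X_0)-g_1(X_0^{(l)})\|_2$ bounds $a_{l,2}$.

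Let $Y$ be an independent copy of $X_0$, independent of $(X_n,Z_n)_{n\in\Z}$. Since $g_1(x)=E[g(x,Y)]-Eg(X,Y)$, conditioning on $(X_0,X_0^{(l)})$ gives
\begin{equation*}
 g_1(X_0)-g_1(X_0^{(l)}) \;=\; E\bigl[g(X_0,Y)-g(X_0^{(l)},Y)\,\bigm|\,X_0,X_0^{(l)}\bigr],
\end{equation*}
and conditional Jensen yields
$\|g_1(X_0)-g_1(X_0^{(l)})\|_2^2 \le E|g(X_0,Y)-g(X_0^{(l)},Y)|^2$. This reduces the problem to controlling how close $g(X_0^{(l)},Y)$ is to $g(X_0,Y)$ in $L_2$.

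Next, split this expectation according to whether the NED approximation is good, i.e.\ on $\{|X_0-X_0^{(l)}|\le s_l\}$ versus its complement. On the good event the integrand is bounded by $\sup_{|x-X_0|\le s_l}|g(x,Y)-g(X_0,Y)|^2$, and since $X_0\sim F$ and $Y\sim F$ are independent, Assumption \ref{ass:variation2} (applied to the pair $(X_0,Y)$, noting that the stated supremum over $(x,y)$ dominates the supremum with the second coordinate fixed at $Y$) yields the bound $L s_l$. On the bad event, boundedness $\|g\|_\infty\le M$ gives the integrand $\le 4M^2$, and the $P$NED inequality furnishes $P(|X_0-X_0^{(l)}|>s_l)\le a_l\phi(s_l)$. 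Summing and invoking $a_l\phi(s_l)=O(s_l)$ delivers $\|g_1(X_0)-g_1(X_0^{(l)})\|_2^2=O(s_l)$, hence $a_{l,2}=O(s_l^{1/2})$.

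The principal technical point — and the only place one has to be careful — is the second step: $X_0^{(l)}$ is generically not $F$-distributed, so the variation condition cannot be applied by substituting it for a coordinate of $(X,Y)$; one must keep $X_0\sim F$ in one slot and the genuine $F$-distributed independent copy $Y$ in the other, using the observation $\|(X_0^{(l)},Y)-(X_0,Y)\|=|X_0-X_0^{(l)}|$ to transfer small NED distance into the premise of the variation inequality. The remaining computations are routine once this observation is in place.
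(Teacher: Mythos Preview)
Your argument is correct and is the natural one: approximate $g_1(X_0)$ by $g_1(f_l(Z_{-l},\dots,Z_l))$, use conditional Jensen to pass to $E|g(X_0,Y)-g(X_0^{(l)},Y)|^2$, and split on the event $\{|X_0-X_0^{(l)}|\le s_l\}$, invoking the variation condition on the good event and boundedness plus the $P$NED bound on the bad event. The paper does not give its own proof of this lemma but refers to Lemma~A.2 of \citet{dehling:vogel:wendler:wied:2015:vs4}, of which the present statement is the bounded-kernel analogue; your argument is precisely the standard route and almost certainly coincides with that reference.
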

\begin{lemma}\label{lem6} Under Assumptions \ref{ass:ned} and \ref{ass:variation}, we have for any $t\in\R$
\begin{equation*}
\left\|\max\nolimits_{n\leq 2^k}\sum\nolimits_{1\leq i<j\leq n}h_2(X_i,X_j,t)\right\|_2\leq C 2^{\frac{5}{4}k}k,
\end{equation*}
and 
$\sum_{1\leq i<j\leq n}h_2(X_i,X_j,t)=O\left(n^{5/4}\log^2 (n)\right)$ 
almost surely.
\end{lemma}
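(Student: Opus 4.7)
Write $T_n(t) = \sum_{1\le i<j\le n} h_2(X_i,X_j,t)$ and recall that, by construction, $h_2$ is canonical, i.e.\ $E\,h_2(X_0,y,t) = 0$ for every $y,t$. My first aim is to establish the single-index bound $\|T_n(t)\|_2 \le C n^{5/4}$. Expand $\|T_n(t)\|_2^2$ into the quadruple sum $\sum_{i<j,\, k<l} E[h_2(X_i,X_j,t)\, h_2(X_k,X_l,t)]$ and approximate each $X_i$ by the block functional $\tilde X_i^{(m)} := f_m(Z_{i-m},\ldots,Z_{i+m})$ coming from the \PNED\ property. The variation condition (Assumption \ref{ass:variation}) translates the probabilistic approximation rate $P(|X_0-\tilde X_0^{(m)}|>\epsilon)\le a_m\phi(\epsilon)$ into an $L_2$-bound for the error $h_2(X_i,X_j,t)-h_2(\tilde X_i^{(m)},\tilde X_j^{(m)},t)$. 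For index tuples $(i,j,k,l)$ whose blocks $[i-m,i+m],\ldots,[l-m,l+m]$ are disjoint, the absolute regularity of $(Z_i)_{i\in\Z}$ decouples the two pairs up to an error $O(\beta_{d-2m})$, where $d$ is the gap between blocks; the canonicity of $h_2$ kills the leading (product-of-expectations) term. Combining this mixing estimate with the rate $\sum_k k\beta_k<\infty$, and choosing $m$ as a suitable small power of $n$ so that the constraint $a_m\phi(m^{-6})=O(m^{-6})$ from Assumption \ref{ass:ned} balances the two error sources, produces the bound $E T_n(t)^2 \le Cn^{5/2}$.

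\textbf{From the single-index bound to the maximum.} The same argument applied to $T_n(t)-T_{n'}(t)$ (a sum over indices touching $\{n'+1,\ldots,n\}$) yields a superadditive $L_2$-increment bound of the form $\|T_n(t) - T_{n'}(t)\|_2^2 \le C\,\big((n-n')n^{3/2} + (n-n')^{5/2}\big)$. This is precisely the structural input required for a Móricz/Menchov maximal inequality on dyadic blocks: decomposing $\{1,\ldots,2^k\}$ into at most $k$ nested dyadic layers, applying Minkowski on each layer, and summing the $k$ contributions gives
\[
\Big\|\max_{n\le 2^k} T_n(t)\Big\|_2 \le C\,k\,2^{5k/4},
\]
which is the first assertion of the lemma.

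\textbf{Almost sure rate.} For the second statement, apply Chebyshev's inequality with the preceding bound:
\[
P\Big(\max_{n\le 2^k}|T_n(t)| > \varepsilon\, 2^{5k/4}k^2\Big) \le \frac{C^2 k^2 2^{5k/2}}{\varepsilon^2 k^4 2^{5k/2}} = \frac{C^2}{\varepsilon^2 k^2}.
\]
The right-hand side is summable in $k$, so by Borel--Cantelli, $\max_{n\le 2^k}|T_n(t)| \le \varepsilon\, 2^{5k/4}k^2$ for all $k$ large enough, almost surely. Given $n$, let $k$ be the unique integer with $2^{k-1}<n\le 2^k$; then $2^{5k/4}k^2 \le C\, n^{5/4}\log^2 n$, which delivers $T_n(t)=O(n^{5/4}\log^2 n)$ a.s.

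\textbf{Main obstacle.} The crux is the first step. For i.i.d.\ data, the canonicity of $h_2$ makes the quadruple sum collapse to the diagonal, giving the much sharper $O(n)$ bound; under \PNED\ plus absolute regularity, one must show that only an additional $n^{1/4}$ factor is lost through the combined block approximation and mixing estimate. The delicate point is balancing the \PNED\ error (which grows with the block length $m$) against the mixing error (which shrinks with $m$) in a way that is compatible with the rate conditions $a_l\phi(l^{-6})=O(l^{-6})$ and $\sum_k k\beta_k<\infty$ of Assumption \ref{ass:ned}, while still exploiting the vanishing of the leading term that canonicity provides for disjoint-block quadruples.
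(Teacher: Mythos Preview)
The paper does not prove this lemma; it simply cites Lemma~B.6 of Dehling, Vogel, Wendler and Wied (2015). Your outline is essentially the strategy of that reference: a quadruple-sum moment bound for the degenerate part via block approximation plus $\beta$-mixing, then a dyadic maximal inequality, then Chebyshev and Borel--Cantelli. Steps~2 and~3 of your plan are correct as written.

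There is, however, one point in Step~1 that you have glossed over and which is precisely where the work lies. After you replace $X_i$ by the block functional $\tilde X_i^{(m)}=f_m(Z_{i-m},\ldots,Z_{i+m})$, the kernel $h_2$ is \emph{no longer degenerate} with respect to the distribution of $\tilde X_0^{(m)}$; canonicity $E\,h_2(X_0,y,t)=0$ holds for the law of $X_0$, not for that of $\tilde X_0^{(m)}$. Hence when you decouple well-separated blocks via absolute regularity, the ``leading product-of-expectations term'' is not zero but of the order of the approximation error. The standard remedy, used in the cited reference and visible in the proof of Lemma~\ref{lem13} of the present paper, is to introduce a second Hoeffding decomposition $h_{2,m}$ of $h$ with respect to the law of $\tilde X_0^{(m)}$, bound $\|h_2(\tilde X_i^{(m)},\tilde X_j^{(m)},t)-h_{2,m}(\tilde X_i^{(m)},\tilde X_j^{(m)},t)\|_2$ via the variation condition and the \PNED\ rate, and only then exploit the (now genuine) degeneracy of $h_{2,m}$ together with the covariance bound $|E\,h_{2,m}(\tilde X_{i_1}^{(m)},\tilde X_{i_2}^{(m)},t)\,h_{2,m}(\tilde X_{i_3}^{(m)},\tilde X_{i_4}^{(m)},t)|\le C\beta_{d-2m}$, where $d=\max\{i_{(2)}-i_{(1)},\,i_{(4)}-i_{(3)}\}$ for the ordered indices. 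Note also that the relevant gap is this maximal outer gap, which isolates one index; your phrasing ``decouples the two pairs'' is not quite the right picture in general. Once this re-centering step is inserted, the balance you describe between the \PNED\ error and the mixing error (with $m$ a small power of $n$) does yield $E\,T_n(t)^2\le Cn^{5/2}$ under Assumption~\ref{ass:ned}, and the rest of your argument goes through.
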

This is Lemma B.6 of \citet{dehling:vogel:wendler:wied:2015:vs4}.

\begin{proposition}\label{pro1} Under Assumptions \ref{ass:ned} and \ref{ass:variation},  we have
$U_n(U^{-1}(p))-p = O(\sqrt{\log\log(n)/ n})$  almost surely.
\end{proposition}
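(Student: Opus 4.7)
The plan is to start from the Hoeffding decomposition evaluated at $t = U^{-1}(p)$, where $U(t) = p$:
\[
 U_n(U^{-1}(p)) - p
	= \frac{2}{n}\sum_{i=1}^{n} h_1\bigl(X_i,U^{-1}(p)\bigr)
 + \frac{2}{n(n-1)}\sum_{1\leq i<j\leq n} h_2\bigl(X_i,X_j,U^{-1}(p)\bigr).
\]
The degenerate part is handled directly by Lemma \ref{lem6}: since $\sum_{1 \le i<j \le n} h_2(X_i, X_j, U^{-1}(p)) = O(n^{5/4} \log^2 n)$ almost surely, division by $n(n-1)$ yields $O(n^{-3/4} \log^2 n)$, which is of smaller order than $\sqrt{\log\log(n)/n}$.

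The main work is on the linear term $S_n := \sum_{i=1}^n h_1(X_i, U^{-1}(p))$. The summands are centered (by construction of $h_1$) and uniformly bounded (since $h$ is bounded). To invoke a law of the iterated logarithm, I would first establish that the sequence $(h_1(X_i, U^{-1}(p)))_{i \in \Z}$ is $L_2$-near epoch dependent on $(Z_i)_{i \in \Z}$. To this end, apply Lemma \ref{lem1} to the auxiliary symmetric kernel $\tilde g(x,y) := h(x, y, U^{-1}(p))$, which is bounded and, by Assumption \ref{ass:variation}, satisfies the variation condition at the fixed level $t = U^{-1}(p)$ (with the outer expectation in Assumption \ref{ass:variation} bounding the version needed in Lemma \ref{lem1} via boundedness of $h$). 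Under Assumption \ref{ass:ned}, we may choose $s_l = l^{-6}$ so that $a_l \phi(s_l) = O(s_l)$, giving $L_2$-NED approximation constants $a_{l,2} = O(l^{-3})$, hence summable.

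Since the absolute regularity coefficients satisfy $\sum_k k \beta_k < \infty$ (Assumption \ref{ass:ned}) and the bounded $L_2$-NED sequence $(h_1(X_i, U^{-1}(p)))$ has summable NED constants, a standard LIL for near epoch dependent sequences on absolutely regular processes (e.g.\ of the type available in the literature on approximating functionals, in the spirit of results cited in connection with Proposition \ref{theo1}) yields
\[
 S_n = O\!\left(\sqrt{n \log\log n}\right) \quad \text{almost surely},
\]
so that $\frac{2}{n} S_n = O(\sqrt{\log\log(n)/n})$ a.s., and combining with the degenerate part finishes the proof.

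The main obstacle I anticipate is the invocation of the LIL: the result is standard for mixing sequences but for $L_2$-NED sequences on absolutely regular processes one must cite a precise reference or adapt a Marcinkiewicz/Philipp-type coupling argument. Because $h_1(X_i, U^{-1}(p))$ is bounded, however, no moment issues arise, and the proof reduces to a routine appeal to the LIL; everything else is bookkeeping from the Hoeffding decomposition and Lemmas \ref{lem1}--\ref{lem6}.
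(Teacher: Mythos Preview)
Your proposal is correct and follows essentially the same approach as the paper: Hoeffding decomposition at $t=U^{-1}(p)$, Lemma~\ref{lem6} for the degenerate part, Lemma~\ref{lem1} to get $L_2$-NED of $(h_1(X_i,U^{-1}(p)))_{i\in\Z}$, and then a LIL for the linear part. The only thing the paper adds beyond your outline is a concrete reference for the LIL step, namely Theorem~8 of \citet{oodaira:yoshihara:1971}, which resolves exactly the obstacle you anticipated.
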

\begin{proof}
We use the Hoeffding decomposition
\begin{equation*}
U_n(U^{-1}(p))-p=\frac{2}{n}\sum_{i=1}^nh_1(X_i,U^{-1}(p))+\frac{2}{n(n-1)}\sum_{1\leq i<j\leq n}h_2(X_i,X_j,U^{-1}(p)).
\end{equation*}
For the second summand, we can use Lemma \ref{lem6}. By Lemma \ref{lem1}, the sequence $(h_1(X_n,U^{-1}(p)))_{n\in\Z}$ is $L_2$-NED, so we can use the law of the iterated logarithm as in Theorem 8 of \citet{oodaira:yoshihara:1971} for first summand.
\end{proof}

To approximate the $U$-quantiles by $U$-statistics, we use the following generalized Bahadur representation.
\begin{proposition} \label{pro2} Under Assumptions \ref{ass:ned}, \ref{ass:variation}, and \ref{ass:smooth}, we have
\begin{enumerate}[(A)]
\item
$ \displaystyle 
  \sup_{\left|t-U^{-1}(p)\right|\leq C\sqrt{\frac{\log\log n}{n}}}\left|U_n(t)-U(t)-U_n\left(U^{-1}(p)\right)+p\right|=O\left(n^{-5/8}\right)$
\ \ and
\item
$ \displaystyle 
 R_n = U^{-1}_n\left(p\right)-U^{-1}(p)+\frac{U_n\left(U^{-1}(p)\right)-p}{u\left(U^{-1}(p)\right)}=O\left(n^{-5/8}\right)$
almost surely as $n\rightarrow\infty$.
\end{enumerate}
\end{proposition}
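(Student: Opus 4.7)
The plan is to establish (A) by a chaining argument for the empirical $U$-process on a fine grid inside the LIL-window around $U^{-1}(p)$, and then to deduce (B) by inserting $t = U_n^{-1}(p)$ into (A) after first controlling $U_n^{-1}(p)$ itself via Proposition \ref{pro1} and Assumption \ref{ass:smooth}.

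For (A), introduce the centered oscillation $V_n(t) := U_n(t) - U(t) - \{U_n(U^{-1}(p)) - p\}$ and apply the Hoeffding decomposition (\ref{eq:hoeffding}) to split $V_n(t) = L_n(t) + D_n(t)$, with linear part $L_n(t) = (2/n)\sum_{i=1}^n\{h_1(X_i,t) - h_1(X_i,U^{-1}(p))\}$ and degenerate remainder $D_n(t) = (2/(n(n-1)))\sum_{1\le i<j\le n}\{h_2(X_i,X_j,t)-h_2(X_i,X_j,U^{-1}(p))\}$. The degenerate piece is handled uniformly by Lemma \ref{lem6} applied to the bounded symmetric kernel $h_2(\cdot,\cdot,t) - h_2(\cdot,\cdot,U^{-1}(p))$, producing $\sup_t |D_n(t)| = O(n^{-3/4}\log^2 n) = o(n^{-5/8})$ almost surely. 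For the linear part I would discretize $[U^{-1}(p)-\rho_n, U^{-1}(p)+\rho_n]$, with $\rho_n = C\sqrt{\log\log n/n}$, into a grid of spacing $\delta_n = n^{-\gamma}$ for a sufficiently large $\gamma$. Assumption \ref{ass:variation} together with Assumption \ref{ass:smooth} delivers the key variance bound $E(h_1(X_0,t)-h_1(X_0,U^{-1}(p)))^2 \le C|t-U^{-1}(p)|$, so that Lemma \ref{lem1} upgrades $(h_1(X_i,t)-h_1(X_i,U^{-1}(p)))_i$ to an $L_2$-NED sequence whose partial-sum variance at each grid point is of order $\rho_n/n$. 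A high-order Rosenthal-type moment inequality for $L_2$-NED sums, combined with a union bound over the $\rho_n/\delta_n$ grid points and Borel--Cantelli, then yields $\max_k |L_n(t_{n,k})| = O(n^{-5/8})$ a.s., while the monotonicity of $U_n$ and of $U$ controls the within-cell oscillation through the local Lipschitz bound on $U$ and a second variance estimate for the increments of $U_n$ across spacing $\delta_n$.

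For (B), let $t_n := U_n^{-1}(p)$. First combine Proposition \ref{pro1} with the local linearization of $U$ from Assumption \ref{ass:smooth} to obtain $|t_n - U^{-1}(p)| = O(\sqrt{\log\log n/n})$ a.s., so that $t_n$ lies in the admissible window of (A). Since $h$ is bounded, the definition of the generalized inverse forces $|U_n(t_n) - p| = O(1/n)$, and applying (A) at $t = t_n$ consequently yields $U(t_n) - p = -\{U_n(U^{-1}(p)) - p\} + O(n^{-5/8})$. Expanding the left-hand side via (\ref{eq:smooth}) as $u(U^{-1}(p))(t_n - U^{-1}(p)) + o(|t_n-U^{-1}(p)|^{3/2})$ and noting that the remainder is $o((\log\log n/n)^{3/4}) = o(n^{-5/8})$, a rearrangement gives (B). The hard part will be the chaining step in (A): extracting the sub-$n^{-1/2}$ rate $n^{-5/8}$ without any moment hypothesis on the $X_i$ hinges on a delicate matching of the grid spacing $\delta_n$, the shrinking per-term variance $C\rho_n$, and the order of the Rosenthal-type inequality, so that the union bound over the grid leaves a tail probability that is summable in $n$.
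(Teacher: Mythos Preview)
Your plan for (A) has the right ingredients but assigns them to the wrong pieces, and one step fails as written. Lemma \ref{lem6} is a statement for \emph{fixed} $t$: the almost-sure bound $\sum_{i<j} h_2(X_i,X_j,t) = O(n^{5/4}\log^2 n)$ does not transfer to $\sup_t$ over the LIL-window, so your claim $\sup_t |D_n(t)| = O(n^{-3/4}\log^2 n)$ a.s.\ is unjustified. In the paper it is precisely the degenerate part that requires the discretization-plus-Borel--Cantelli machinery you describe for $L_n$: reduce the supremum to a grid of spacing $c_n$ of order $n^{-5/8}$ (hence roughly $n^{1/8}\log n$ points), apply the $L_2$-bound of Lemma \ref{lem6} at each grid point, square-sum over the grid, and use Markov along dyadic blocks $2^{k-1}<n\le 2^k$. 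Conversely, the linear part is \emph{not} handled by a fresh chaining argument: the paper invokes Theorem 1 of \citet{Wendler2011}, an existing uniform oscillation bound for $\sup_{|t-t_0|\le \rho_n}\bigl|n^{-1}\sum_i\{h_1(X_i,t)-h_1(X_i,t_0)\}\bigr|$, after checking via Lemma \ref{lem1} that $(h_1(X_n,t_0))_n$ is $L_2$-NED and that the variation condition holds uniformly in $t$ near $t_0$. Note also that the monotonicity interpolation between grid points is only available for $U_n$ and $U$, i.e.\ for $V_n$ itself; neither $L_n$ nor $D_n$ is monotone in $t$, so the reduction to the grid must be done \emph{before} the Hoeffding decomposition, not after, which is the order the paper follows.

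Your argument for (B) is correct and is a more elementary alternative to the paper's route, which packages the same computation into Vervaat's (1972) inversion lemma applied to $Z_n(x) = U_n^{-1}(x + U_n(t_0)) - x - t_0$, bounding $\sup_{|x|\le C\sqrt{\log\log n/n}}|Z_n(x)|$ via Part (A) and Assumption \ref{ass:smooth}. The one point you pass over too quickly is that $t_n = U_n^{-1}(p)$ actually lies in the LIL-window; this follows from Proposition \ref{pro1}, the monotonicity of $U_n$, and the local lower bound $U(t_0+\epsilon)-p \ge c\,\epsilon$ implied by Assumption \ref{ass:smooth}, but it deserves an explicit sentence before you plug $t = t_n$ into (A).
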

\begin{proof} 
To shorten notation, we abbreviate $U^{-1}(p)$ by $t_0$. Keep in mind that $U(t_0) = p$.

\emph{Part (A)}: 
We set $c_n = 2^{-5k/8}$ for $n = 2^{k-1}+1,\ldots,2^k$ and $k \in\N$. Note that $U(t)$ and $U_n(t)$ are non-decreasing, so for any $m\in\N$ and any $t\in[t_0+mc_n,t_0+(m+1)c_n]$ we have
\begin{multline*}
\left|U_n(t)-U_n\left(t_0\right)-U(t)+p\right|\\
\leq \max\big\{\left|U_n\left(t_0+m c_n\right)-U_n\left(t_0\right)-U(t)+p\right|, \\
\qquad \qquad \left|U_n\left(t_0+(m+1)c_n\right)-U_n\left(t_0\right)-U\left(t_0+(m+1)c_n\right)+p\right|\big\}\\
\leq \max\big\{\left|U_n\left(t_0+m c_n\right)-U_n\left(t_0\right)-U(t)+p\right|,\\
 \qquad \qquad \left|U_n\left(t_0+(m+1)c_n\right)-U_n\left(t_0\right)-U\left(t_0+(m+1)c_n\right)+p\right|\big\}\\
+\left|U\left(t_0+(m+1)c_n\right)-U\left(t_0+m c_n\right)\right|.
\end{multline*}
Using this inequality for all $t$ such that $|t-t_0|\leq C\sqrt{(\log k)/2^k}$, it follows that
\begin{align*}
			&\, \sup_{|t-t_0|\leq C\sqrt{\frac{\log k}{2^k}}}\left|U_n(t)-U_n\left(t_0\right)-U(t)+p\right|\\
\leq	&\, \max_{\left|m\right|\leq C2^{-5k/8}\log k}\left|U_n\left(t_0+(m+1)c_n\right)-U_n\left(t_0\right)-U\left(t_0+mc_n\right)+p\right|\\
			&+\max_{\left|m\right|\leq C2^{-k/8}\log k}\left|U\left(t_0+(m+1)c_n\right)-U\left(t_0+mc_n\right)\right|,
\end{align*}
and by Assumption \ref{ass:smooth} on the differentiability of the $U$-distribution function:
\begin{equation*}
\max_{\left|m\right|\leq C2^{-5k/8}\log k}\left|U\left(t_0+(m+1)c_n\right)-U\left(t_0+mc_n\right)\right|=O\left(c_n\right).
\end{equation*}
We use the Hoeffding decomposition and treat the linear part and the degenerate part separately:
\begin{multline*}
\max_{\left|m\right|\leq C2^{k/8}\log k}\left|U_n\left(t_0+(m+1)c_n\right)-U_n\left(t_0\right)-U\left(t_0+mc_n\right)+p\right|\\
\leq \max_{\left|m\right|\leq C2^{k/8}\log k}\left|\frac{2}{n}\sum_{i=1}^{n}h_{1}\left(X_{i},t_0+mc_n\right)-\frac{2}{n}\sum_{i=1}^{n}h_{1}\left(X_{i},t_0\right)\right|\\
+\max_{\left|m\right|\leq C2^{k/8}\log k}\left|\frac{2}{n(n-1)}\sum_{i=1}^{n}h_{2}\left(X_{i},X_j,t_0+mc_n\right)-\frac{2}{n(n-1)}\sum_{i=1}^{n}h_{2}\left(X_{i},X_j,t_0\right)\right|
\end{multline*}
The functions satisfying the variation condition (Assumption \ref{ass:variation}) form a vector space, so for $h_1$ the variation condition holds uniformly in some neighborhood of $t_0$. Furthermore, the sequence $(h_1(X_n,t_0))_{n\in\Z}$ is $L_2$-NED by Lemma \ref{lem1} and thus the approximation condition of \citet{Wendler2011} holds. Applying Theorem 1 of \citet{Wendler2011} to the function $g=h_1$, we obtain
\begin{multline*}
\max_{\left|m\right|\leq C2^{k/8}\log k}\left|\frac{2}{n}\sum_{i=1}^{n}h_{1}\left(X_{i},t_0+mc_n\right)-\frac{2}{n}\sum_{i=1}^{n}h_{1}\left(X_{i},t_0\right)\right|\\
\leq \sup_{|t-t_0|\leq C\sqrt{\frac{\log k}{2^k}}}\left|\frac{2}{n}\sum_{i=1}^{n}h_{1}\left(X_{i},t\right)-\frac{2}{n}\sum_{i=1}^{n}h_{1}\left(X_{i},t_0\right)\right|
=O\left(c_n\right)
\end{multline*}
almost surely. It remains to show that
\begin{equation} \label{eq:partA.ende}
	\max_{\left|m\right|\leq C2^{k/8}\log k}\left|Q_n\left(t_0+mc_n\right)-Q_n\left(t_0\right)\right|=O\left(n^2c_n\right)
\end{equation}
almost surely, where $Q_n(t) = \sum_{1\leq i<j\leq n}h_{2}\left(X_{i},X_{j},t\right)$. Recall that for any random variables $Y_1,\ldots,Y_m$, it holds $E\left(\max_{i=1,\ldots,m}|Y_i|\right)^2\leq\sum_{i=1}^m EY_i^2$ and therefore
\[
	E \left(
		\max_{2^{k-1}< n\leq 2^k}\max_{\left|m\right|\leq C2^{k/8}\log k}\frac{2}{2^{2k}c_n}
		\left|Q_n\left(t_0+c_n m\right)-Q_n\left(t_0\right)\right|
	\right)^2
\]\[
	\leq 
	\frac{2}{2^{4k} (2^{-5k/8})^2 }\sum_{\left|m\right|\leq C2^{k/8}\log k}
	 E \left(\max_{2^{k-1}< n\leq 2^k}\left|Q_n\left(t_0 + c_n m\right)-Q_n\left(t_0\right)\right|\right)^2,
\]
where we have used that $c_n = 2^{-5k/8}$ for $n = 2^{k-1} + 1, \ldots, 2^k$. The right-hand side is further bounded by
\[
	 \frac{4}{2^{11k/4}}
			\sum_{\left|m\right|\leq C2^{k/8}\log k}
			\! \! E\!\left(\max_{2^{k-1}< n\leq 2^k}\left|Q_n\left(t_0+c_n m\right)\right|\right)^2 
	\leq \frac{ C 2^{k/8}}{2^{11k/4}} \log(k) 2^{\frac{5}{2}k}k^2
	= C 2^{-\frac{k}{8}}k^2\log k,
\]
where we have applied Lemma \ref{lem6}. Using the Markov inequality, we conclude that
\begin{multline*}
 \sum_{k=1}^\infty P\left(\max_{2^{k-1}< n\leq 2^k}\max_{\left|m\right|\leq C2^{k/8}\log k}\frac{2}{2^{2k}c_n}\left|Q_n\left(t_0+c_n m\right)-Q_n\left(t_0\right)\right|>\epsilon\right)\\
\leq \sum_{k=1}^\infty\frac{1}{\epsilon^2}E\left(\max_{2^{k-1}< n\leq 2^k}\max_{\left|m\right|\leq C2^{k/8}\log k}\frac{2}{2^{2k}c_n}\left|Q_n\left(t_0+c_n m\right)-Q_n\left(t_0\right)\right|\right)^2\\
\leq C\sum_{k=1}^\infty 2^{-\frac{k}{8}}k^2\log k<\infty,
\end{multline*}
and with the Borel--Cantelli lemma (\ref{eq:partA.ende}) follows, and hence Part (A) is proved.
%
%
%
%
%
%
%
%

\emph{Part (B):} 
Without loss of generality, let $u\left(t_0\right)=1$, otherwise replace $h(x,y,t)$ by $h(x,y,\frac{t}{u(t_0)})$. We represent $R_n$ as $Z_n\left(p-U_n\left(t_0\right)\right)$ with
\begin{equation*}
 Z_n\left(x\right)=\left(U_n\left(\cdot+t_0\right)-U_n\left(t_0\right)\right)^{-1}\left(x\right)-x=U_n^{-1}\left(x+U_n\left(t_0\right)\right)-x-t_0,
\end{equation*}
where $\left(U_n\left(\cdot+t_0\right)-U_n\left(t_0\right)\right)^{-1}$ is the inverse function of $x \mapsto U_n\left(x+t_0\right)-U_n\left(t_0\right)$. By Proposition \ref{pro1}, we have
$ \limsup_{n\rightarrow\infty}	\pm	\sqrt{n(\log\log n)^{-1}} \left(U_n\left(t_0\right)-p\right) = C$.
By Assumption \ref{ass:smooth} and Part (A), we have
\begin{eqnarray*}
	\sup_{\left|x\right|\leq C\sqrt{(\log \log n) / n}}\left|Z_n(x)\right|
	& = 	& \sup_{\left|x\right|\leq C\sqrt{(\log \log n) / n}}\left|U_n\left(x+t_0\right)-U_n\left(t_0\right)-x\right| \\
  & \leq&  \sup_{\left|x\right|\leq C\sqrt{(\log \log n) / n}}\left|U_n\left(x+t_0\right)-U\left(x+t_0\right)-U_n\left(t_0\right)+p\right|\\
  &     & + \sup_{\left|x\right|\leq C\sqrt{(\log \log n) / n}}\left|U\left(x+t_0\right)-p-x\right|=O\left(c_n\right).
\end{eqnarray*}
Then by Theorem 1 of \citet{Vervaat1972}, 
$ \left|R_n\right|\leq\sup_{\left|x\right|\leq C\sqrt{(\log \log n) / n}}\left|Z_n\left(x\right)\right|=O\left(c_n\right)$,
so Part (B) of Proposition \ref{pro2} is proved.
\end{proof}

We are now ready to prove Theorem \ref{theo:1}.
\begin{proof}[Proof of Theorem \ref{theo:1}] We write
\begin{equation*}
		\frac{[ns]}{\sqrt{n}}\left(U_{[ns]}^{-1}(p)-U^{-1}(p)\right)
		=\frac{[ns]}{\sqrt{n}}\frac{p-U_n\left(U^{-1}(p)\right)}{u\left(U^{-1}(p)\right)}+\frac{[ns]}{\sqrt{n}}R_n.
\end{equation*}
where $R_n$ is as in Proposition \ref{pro2}.
By Proposition \ref{theo1},
\begin{equation*}
\left(\frac{[ns]}{\sqrt{n}}\bigg(\frac{p-U_n\left(U^{-1}(p)\right)}{u\left(U^{-1}(p)\right)}\bigg)\right)_{s\in[0,1]}
\end{equation*}
converges weakly in $D[0,1]$ to $\sigma W$, there $W$ is a standard Brownian motion and the variance is given by
\begin{equation*}
\sigma^2=\frac{4}{u^2(U^{-1}(p))}\sum_{k=-\infty}^\infty \cov\left(h_1(X_0,U^{-1}(p)),h_1(X_k,U^{-1}(p))\right)
\end{equation*}
By Proposition \ref{pro2}, we have $n^{5/8}R_n\rightarrow 0$  almost surely
 and thus $|nR_n|\leq C n^{3/8}$ almost surely. Consequently
\begin{equation*}
\sup_{s\in[0,1]}\frac{[ns]}{\sqrt{n}}|R_{[ns]}|=\frac{1}{\sqrt{n}}\max_{k\leq n}k|R_{k}|\leq\frac{1}{\sqrt{n}}Cn^{3/8}\rightarrow 0
\end{equation*}
almost surely, and Slutsky's theorem completes the proof.
\end{proof}

%
%
%
%
%
%
%
%
%
%
%
%
\section{Proof of Theorem \ref{theo:2}}
\label{sec:appB}

The proof of Theorem \ref{theo:2} consists of two main steps: showing the convergence of the density estimator $\hat{u}_n^2(U^{-1}_n(p))$ to $u^2(U^{-1}(p))$ and showing the convergence of the cumulative autocovariance part. The former is the content of Lemma \ref{lem13}. The following Lemma \ref{lem12} is an essential tool for the latter step. 
\begin{lemma}\label{lem12} Under Assumptions \ref{ass:ned}, \ref{ass:variation}, \ref{ass:smooth}, and \ref{ass:hac} we have 
\[ 
\sum_{r=-(n-1)}^{(n-1)}\frac{1}{n}\sum_{i=1}^{n-|r|}\left(\hat{h}_1(X_i,t_n)\hat{h}_1(X_{i+|r|},t_n)-\hat{h}_1(X_i,t_0)\hat{h}_1(X_{i+|r|},t_0)\right)W\left(\frac{|r|}{b_n}\right)
\] 
converges to 0 in probability as $n\rightarrow\infty$, where we have abbreviated $t_0 = U^{-1}(p)$ and $t_n = U^{-1}_n(p)$.
\end{lemma}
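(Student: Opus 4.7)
The plan is to reduce the statement to the single estimate
\[
\Xi_n := \frac{1}{n}\sum_{i=1}^n \bigl|\hat h_1(X_i, t_n) - \hat h_1(X_i, t_0)\bigr| = O_P(n^{-1/2}),
\]
after which the lemma follows from $b_n/\sqrt n \to 0$. Expanding the product difference via $ab - cd = a(b-d) + d(a-c)$ and using the boundedness of $h$ (hence $|\hat h_1| \le C$ for some constant), the $i$-th summand at lag $r$ is bounded in absolute value by $C(|\hat h_1(X_i,t_n) - \hat h_1(X_i,t_0)| + |\hat h_1(X_{i+|r|},t_n) - \hat h_1(X_{i+|r|},t_0)|)$, and averaging over $i$ yields a bound by $2C\,\Xi_n$ uniformly in $r$. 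Since $|W|$ is dominated by a non-increasing integrable function (Assumption \ref{ass:hac}), a Riemann-sum comparison gives $\sum_{r=-(n-1)}^{n-1}W(|r|/b_n) = O(b_n)$, so the quantity in Lemma \ref{lem12} is $O(b_n \Xi_n) = (b_n/\sqrt n)\cdot O_P(\sqrt n\,\Xi_n)$, and it remains to prove $\sqrt n\,\Xi_n = O_P(1)$.

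For this I would localize $t_n$ and invoke the monotonicity of $h$ in its third argument (Definition \ref{def:1}). By Theorem \ref{theo:1}, $|t_n - t_0| = O_P(n^{-1/2})$, so for any $\epsilon > 0$ there is a constant $M$ with $P(E_n) \ge 1-\epsilon$ for all large $n$, where $E_n = \{|t_n - t_0| \le M/\sqrt n\}$. On $E_n$, monotonicity yields the deterministic envelope
\[
|h(x,y,t_n) - h(x,y,t_0)| \le D_n(x,y) := h(x,y,t_0 + M/\sqrt n) - h(x,y,t_0 - M/\sqrt n).
\]
Substituting $D_n$ into both pieces of $\hat h_1(x,t) = \frac{1}{n}\sum_j h(x,X_j,t) - \frac{1}{n^2}\sum_{j,k} h(X_j,X_k,t)$ and averaging over $i$ with $x = X_i$ gives
\[
\Xi_n\,\ind{E_n} \le \frac{2}{n^2}\sum_{i,j=1}^n D_n(X_i, X_j)\,\ind{E_n},
\]
where the factor of $2$ comes from bounding the single-sum and the double-sum contributions to $\hat h_1$ by the same symmetric $U$-statistic in $D_n$.

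Up to a diagonal correction of order $1/n$ (since $D_n$ is bounded), the right-hand side equals $2[U_n(t_0+M/\sqrt n) - U_n(t_0-M/\sqrt n)]$. Proposition \ref{pro2}(A) applied at the two points $t_0 \pm M/\sqrt n$, which for $n$ sufficiently large lie in the allowed window $|t - t_0| \le C\sqrt{\log\log n/n}$, gives
\[
U_n(t_0+M/\sqrt n) - U_n(t_0-M/\sqrt n) = [U(t_0+M/\sqrt n) - U(t_0-M/\sqrt n)] + O(n^{-5/8})\ \text{a.s.},
\]
and Assumption \ref{ass:smooth} then yields $U(t_0+M/\sqrt n) - U(t_0-M/\sqrt n) = 2u(t_0)M/\sqrt n + o(n^{-1/2})$. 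Hence $\Xi_n \ind{E_n} = O(n^{-1/2})$ almost surely, and since $\epsilon$ is arbitrary this implies $\sqrt n\,\Xi_n = O_P(1)$, completing the proof modulo the reduction above.

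The main obstacle is the localization step: the deterministic envelope $D_n$ can only be used once $|t_n - t_0|$ is controlled deterministically, which is arranged by conditioning on the high-probability event $E_n$ from Theorem \ref{theo:1}. The remaining ingredients --- Abel-type summation of the HAC weights and the uniform oscillation estimate for $U_n - U$ --- are either classical or already established in Proposition \ref{pro2}(A).
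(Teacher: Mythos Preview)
Your argument is correct and follows essentially the same route as the paper's proof: both expand the product difference, use the monotonicity of $h$ in its third argument to reduce to the full $U$-statistic increment $U_n(t)-U_n(t_0)$, apply the oscillation bound of Proposition~\ref{pro2}(A) together with Assumption~\ref{ass:smooth} to get an $O_P(n^{-1/2})$ bound uniform in the lag $r$, and finish via $\sum_r W(|r|/b_n)=O(b_n)$ and $b_n/\sqrt{n}\to 0$. The only cosmetic difference is that the paper plugs the random $t_n$ directly into Proposition~\ref{pro2}(A) using the almost-sure localization $|t_n-t_0|=O(\sqrt{\log\log n/n})$ from Propositions~\ref{pro1}--\ref{pro2}, whereas you first pass to a deterministic envelope $D_n$ via the in-probability bound from Theorem~\ref{theo:1}.
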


\begin{proof} We first have a look at the covariance estimator for a fixed lag $r$. We will use the facts that $|h|\leq 1$, $|\hat{h}|\leq 1$ and that $h$ is non-decreasing in the third argument.
\[ \textstyle
	\left|\frac{1}{n}\sum_{i=1}^{n-r}\left(\hat{h}_1(X_i,t_n)\hat{h}_1(X_{i+r},U^{-1}_n(p))-\hat{h}_1(X_i,t_0)\hat{h}_1(X_{i+r},t_0)\right)\right|
\] \[ \textstyle
	\leq \left|\frac{1}{n}\sum_{i=1}^{n-r}\left(\hat{h}_1(X_i,t_n)-\hat{h}_1(X_i,t_0)\right)\hat{h}_1(X_{i+r},t_n)\right|
\] \[ \textstyle
	\quad +\left|\frac{1}{n}\sum_{i=1}^{n-r}\hat{h}_1(X_i,t_0)\left(\hat{h}_1(X_{i+r},t_n)-\hat{h}_1(X_{i+r},t_0)\right)\right| 
\] \[ \textstyle
	\leq\left|\frac{1}{n}\sum_{i=1}^{n-r}\left(\frac{1}{n}\sum_{j=1}^nh(X_i,X_j,t_n)-\frac{1}{n}\sum_{j=1}^nh(X_i,X_j,t_0)\right)\hat{h}_1(X_{i+r},t_n)\right|
\] \[ \textstyle
 + \left|\frac{1}{n}\sum_{i=1}^{n-r}
		\left(\frac{1}{n^2}\sum_{j_1,j_2=1}^nh(X_{j_1},X_{j_2},t_n)-\frac{1}{n^2}\sum_{j_1,j_2=1}^nh(X_{j_1},X_{j_2},t_0)\right)\hat{h}_1(X_{i+r},t_n)
	 \right| 
\] \[ \textstyle
	\quad + \left|
		\frac{1}{n}\sum_{i=1}^{n-r}\hat{h}_1(X_i,t_0)
		\left(\frac{1}{n}\sum_{j=1}^nh(X_{i+r},X_j,t_n)-\frac{1}{n}\sum_{j=1}^nh(X_{i+r},X_j,t_0)\right)
	\right|
\] \[ \textstyle
	\quad + \left|
		\frac{1}{n}\sum_{i=1}^{n-r} \hat{h}_1(X_i,t_0)
		\left(\frac{1}{n^2}\sum_{j_1,j_2=1}^nh(X_{j_1},X_{j_2},t_n)-\frac{1}{n^2}\sum_{j_1,j_2=1}^nh(X_{j_1},X_{j_2},t_0)\right)
	\right| 
\] \[ \textstyle
	\leq\left|\frac{1}{n^2}\sum_{i=1}^{n-r}\sum_{j=1}^n\left(h(X_i,X_j,t_n)-h(X_i,X_j,t_0)\right)\right|
\] \[ \textstyle
	\quad +\left|\frac{n-r}{n}\frac{1}{n^2}\sum_{j_1,j_2=1}^n\left(h(X_{j_1},X_{j_2},t_n)-h(X_{j_1},X_{j_2},t_0)\right)\right|
\] \[ \textstyle
	\quad +\left|\frac{1}{n^2}\sum_{i=1}^{n-r}\sum_{j=1}^n\left(h(X_{i+r},X_j,t_n)-h(X_{i+r},X_j,t_0)\right)\right|
\] \[ \textstyle
	\quad +\left|\frac{n-r}{n}\frac{1}{n^2}\sum_{j_1,j_2=1}^n\left(h(X_{j_1},X_{j_2},t_n)-h(X_{j_1},X_{j_2},t_0)\right)\right| 
\] \[ \textstyle
	\leq 4\left|\frac{1}{n^2}\sum_{j_1,j_2=1}^n\left(h(X_{j_1},X_{j_2},t_n)-h(X_{j_1},X_{j_2},t_0)\right)\right|
 \] \[ \textstyle
	\leq 4\left|\frac{1}{n^2}\sum_{j_1,j_2=1}^n\left(h(X_{j_1},X_{j_2},t_n)-h(X_{j_1},X_{j_2},t_0)-U(t_n)+p\right)\right| 
	+ 4\left|U(t_n)-p\right|
\]
First note that the right-hand side of this chain of inequalities does not depend on $r$. By Propositions \ref{pro1} and \ref{pro2}, 
$|t_n - t_0| = O\left( \sqrt{\log\log(n)/n }\right)$ almost surely. So we can conclude with the help of Proposition \ref{pro2} that
\[ \textstyle 
	\left|\frac{1}{n^2}\sum_{j_1,j_2=1}^n\left(h(X_{j_1},X_{j_2},t_n)-h(X_{j_1},X_{j_2},t_0)-U(t_n)+p\right)\right|
\]\[ \textstyle
	\leq \left|U_n(t_n)-U_n(t_0)-U(t_n)+p\right|
	+\left|\frac{1}{n^2}\sum_{j=1}^n\left(h(X_{j},X_{j},t_n)-h(X_{j},X_{j},t_0)-U(t_n)+p\right)\right|
\] \[ 
	\leq \sup_{\left|t-t_0\right|\leq C\sqrt{\frac{\log\log n}{n}}}\left|U_n(t)-U(t)-U_n\left(t_0\right)+p\right| + C/n = O(n^{-5/8})
\]
almost surely. From Assumption \ref{ass:smooth} and Theorem \ref{theo:1}, we conclude that $|U(t_n)-p|\leq C(t_n-t_0) = O_P(n^{-1/2})$,
and finally arrive at
\begin{multline*}
\sum_{r=-(n-1)}^{n-1}\frac{1}{n}\sum_{i=1}^{n-|r|}\left(\hat{h}_1(X_i,t_n)\hat{h}_1(X_{i+|r|},t_n)-\hat{h}_1(X_i,t_0)\hat{h}_1(X_{i+|r|},t_0)\right)W(|r|/b_n)\\
\leq C\frac{1}{\sqrt{n}} \sum_{r=-n}^nW(|r|/b_n)\rightarrow 0
\end{multline*}
in probability as $n\rightarrow\infty$. The proof is complete.
\end{proof}

\begin{lemma}\label{lem13} Under Assumptions \ref{ass:ned}, \ref{ass:smooth}, \ref{ass:density}, and \ref{ass:variation2},
\begin{equation*}
	\hat{u}_n =\frac{2}{n(n-1)d_n}\sum_{1\leq i<j\leq n}K((g(X_i,X_j)-U_n^{-1}(p))/d_n)\rightarrow u(U^{-1}(p))
\end{equation*}
in probability as $n\rightarrow\infty$.
\end{lemma}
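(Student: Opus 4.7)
The plan is to decompose
\[
\hat{u}_n - u(t_0) = \bigl[\hat{u}_n - \tilde{u}_n\bigr] + \bigl[\tilde{u}_n - E\tilde{u}_n\bigr] + \bigl[E\tilde{u}_n - u(t_0)\bigr],
\]
where $t_0 = U^{-1}(p)$ and $\tilde{u}_n = \{2/[n(n-1)d_n]\}\sum_{1 \le i<j \le n} K((g(X_i,X_j) - t_0)/d_n)$ denotes the same kernel density estimator evaluated at the population quantile. Consistency then follows if each bracket is $o_P(1)$.

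For the bias, the change of variables $s = (v - t_0)/d_n$ gives $E\tilde{u}_n = \int K(s)\, u(t_0 + s d_n)\, ds \to u(t_0)$ as $d_n \to 0$, using continuity of $u$ at $t_0$ (Assumption \ref{ass:smooth}) and $\int K = 1$. For the centered fluctuation $\tilde{u}_n - E\tilde{u}_n$, I would use the Hoeffding decomposition of $\tilde{u}_n$ with kernel $\tilde{h}_n(x,y) = d_n^{-1} K((g(x,y) - t_0)/d_n)$. The linear part $\tilde{h}_{1,n}(x) = E\tilde{h}_n(x,X_0) - E\tilde{u}_n$ is uniformly bounded (it is, up to a constant, a density estimator of $g(x,X_0)$ at $t_0$), so by Assumption \ref{ass:ned} its contribution has variance $O(1/n)$. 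The degenerate part satisfies $E\tilde{h}_{2,n}^2 = O(1/d_n)$, and the aim is a variance bound of order $(n \cdot n d_n)^{-1}$, which vanishes under $n d_n^{8/3} \to \infty$, obtained by adapting Lemma \ref{lem6} to the $n$-dependent bounded kernel $d_n^{1/2}\tilde{h}_{2,n}$.

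For the replacement term $\hat{u}_n - \tilde{u}_n$, the Lipschitz property of $K$ with constant $L_K$ and its bounded support (say $[-M,M]$) imply that summands vanish unless $|g(X_i,X_j) - t_0| \le (M+1)d_n$ (valid for large $n$, since $t_n \to t_0$), and each non-vanishing summand is at most $L_K |t_n - t_0|/d_n$ in modulus. Hence
\[
|\hat{u}_n - \tilde{u}_n| \;\le\; \frac{L_K |t_n - t_0|}{d_n^{2}} \cdot \frac{2}{n(n-1)} \sum_{1 \le i < j \le n} \ind{|g(X_i,X_j) - t_0| \le (M+1)d_n},
\]
and the empirical average on the right has expectation $O(d_n)$ by smoothness of $u$ at $t_0$. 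Combined with $|t_n - t_0| = O_P(n^{-1/2})$ from Theorem \ref{theo:1}, this yields $|\hat{u}_n - \tilde{u}_n| = O_P(n^{-1/2}/d_n) = o_P(1)$, because $n d_n^{2} \to \infty$ follows from $n d_n^{8/3} \to \infty$ and $d_n \to 0$.

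The main obstacle is the degenerate Hoeffding term in step two: the kernel $\tilde{h}_{2,n}$ is $n$-dependent and its $L_2$-norm blows up as $d_n \to 0$. The variation condition on $g$ (Assumption \ref{ass:variation2}) together with the Lipschitz continuity of $K$ delivers a variation condition for $\tilde{h}_{2,n}$ whose constants depend explicitly on $d_n$; tracking these constants through the \PNED-argument behind Lemma \ref{lem6} is what ultimately produces the rate constraint $n d_n^{8/3} \to \infty$ of Assumption \ref{ass:density}.
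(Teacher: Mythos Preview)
Your decomposition and overall strategy coincide with the paper's: treat the density estimator at the fixed point $t_0$ via the Hoeffding decomposition of the $n$-dependent kernel $k_n(x,y)=d_n^{-1}K((g(x,y)-t_0)/d_n)$, and separately control the effect of replacing $t_0$ by the random $t_n=U_n^{-1}(p)$. For the latter step you use the Lipschitz property of $K$ together with its compact support, whereas the paper uses a sandwich argument with upper and lower envelopes $K_{u,n},K_{l,n}$; your route is arguably cleaner and yields the same conclusion.

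There is, however, a genuine gap in your treatment of the linear Hoeffding term. You claim that $\tilde h_{1,n}(x)=E\tilde h_n(x,X_0)-E\tilde u_n$ is uniformly bounded because it ``is, up to a constant, a density estimator of $g(x,X_0)$ at $t_0$''. That intuition requires the law of $g(x,X_0)$ to have a density near $t_0$ that is bounded \emph{uniformly in $x$}. None of the stated assumptions (in particular neither Assumption~\ref{ass:smooth} nor Assumption~\ref{ass:variation2}) guarantees this; Assumption~\ref{ass:smooth} concerns only the \emph{averaged} density $u$. The paper therefore does not claim uniform boundedness: it only uses $k_n\le C/d_n$ and $E|k_n(X,Y)|\le C$ to get $Ek_{1,n}^2\le C/d_n$, then tracks the $d_n$-dependence through the $L_2$-NED approximation constants (which scale like $d_n^{-2}$ via the Lipschitz constant of $k_n$) to obtain a variance bound of order $(nd_n^{5/2})^{-1}$. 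So the linear part is not free; it already consumes part of the bandwidth condition.

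Relatedly, your stated target $(n\cdot nd_n)^{-1}$ for the degenerate part is the i.i.d.\ rate and understates the cost under \PNED. As you acknowledge in the final paragraph, the coupling argument behind Lemma~\ref{lem6} applied to $k_{2,n}$ produces constants of order $d_n^{-2}$ (from the variation condition via Lipschitz continuity of $K$) and $d_n^{-1}$ (from the sup bound), leading to an $L_2$ bound of order $(nd_n^{8/3})^{-3/4}$ for the degenerate $U$-statistic; this, not the i.i.d.\ heuristic, is what forces $nd_n^{8/3}\to\infty$. Your proposal would be correct once the unjustified uniform-boundedness claim for $\tilde h_{1,n}$ is replaced by the moment-plus-NED bookkeeping the paper carries out.
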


\begin{proof} 
We introduce an upper kernel $K_{u,n}$ and a lower kernel $K_{l,n}$ by
\begin{equation*}
	K_{u,n}(t)	=\sup_{t':\ |t-t'|\leq\sqrt{\frac{\log n}{n}}}K(t) 
	\ \ \ \text{and} \ \ \ 
	K_{l,n}(t)  = \inf_{t':\ |t-t'|\leq\sqrt{\frac{\log n}{n}}}K(t),
\end{equation*}
and further an upper estimate $\hat{u}_{u,n}$ and a lower estimate $\hat{u}_{l,n}$ by
\begin{align*}
	\hat{u}_{u,n} & =\frac{2}{n(n-1)d_n}\sum_{1\leq i<j\leq n}K_{u,n}\left(\frac{g(X_i,X_j)-U^{-1}(p)}{d_n}\right),\\
	\hat{u}_{l,n} & =\frac{2}{n(n-1)d_n}\sum_{1\leq i<j\leq n}K_{l,n}\left(\frac{g(X_i,X_j)-U^{-1}(p)}{d_n}\right).
\end{align*}
Since $|U_n^{-1}(p)-U^{-1}(p)|=O(\sqrt{\log\log(n)/n})$ almost surely (Propositions \ref{pro1} and \ref{pro2}), we have almost surely $\hat{u}_{l,n}\leq \hat{u}_n\leq \hat{u}_{u,n}$ for all but a finite number of $n$. Hence it suffices to show that $\hat{u}_{u,n}\rightarrow u(U^{-1}(p))$ and $\hat{u}_{l,n}\rightarrow u(U^{-1}(p))$ in probability as $n\rightarrow\infty$. We will focus on $\hat{u}_{u,n}$, as the proof for $\hat{u}_{l,n}$ is analogous. Note that $\hat{u}_n$ is a $U$-statistic with symmetric kernel $k_n(x,y)=K_{u,n}((g(x,y)-U^{-1}(p))/d)$ depending on $n$. We use the Hoeffding decomposition
\[
\tilde{u}_n =Ek_n(X,Y), \ \  k_{1,n}(x) = Ek_n(x,X_i)-\tilde{u}_n, \ \ k_{2,n}(x,y)=k_n(x,y)-k_{1,n}(x)-k_{1,n}(y)-\tilde{u}_n,
\]
where $X$, $Y$ are independent with the same distribution as $X_0$. We obtain
\be \label{eq:decomposition}
	\hat{u}_{u,n}=\tilde{u}_n+\frac{2}{n}\sum_{i=1}^nk_{1,n}(X_i)+\frac{2}{n(n-1)}\sum_{1\leq i<j\leq n}k_{2,n}(X_i,Y_i).
\ee
We treat the three summands on the right-hand side separately.
%
%
%
%
%
%
%
%
%
%
By our assumptions, $K$ has a bounded support, so let $K(x)=0$ for $|x|>M$. Because the density $u$ is continuous and $K$ integrates to 1, we can conclude that
\begin{multline*}
\tilde{u}_n-u(U^{-1}(p))=\int \frac{1}{d_n}K_{u,n}\left(\frac{x-U^{-1}(p)}{d_n}\right)u(x)dx-u(U^{-1}(p))\\
=\int K_{u,n}(x)u(xd_n+U^{-1}(p))dx-u(U^{-1}(p+))\\
\leq\int K_{u,n}(x)\left|u(xd_n+U^{-1}(p))-u(U^{-1}(p))\right|dx+\left(\int K_{u,n}(x)dx-1\right)\\
\leq 2\left(Md_n+\sqrt{\frac{\log n}{n}}\right)\sup_{|x|\leq Md_n+\sqrt{\frac{\log n}{n}}}\left|u(xd_n+U^{-1}(p))-u(U^{-1}(p))\right|\sup_{x\in\R}K(x),
\end{multline*}
which converges to 0 as $n \to \infty$ since $d_n\rightarrow 0$. 
%
%
%
%
%
%
%
%
%
%
To prove the convergence of the second and third summand in the Hoeffding decomposition (\ref{eq:decomposition}), we first gather some properties of the sequence $k_{n}$. Kernel $K$ is Lipschitz continuous for some constant $L_1$, that is $|K(x)-K(y)|\leq L_1|x-y|$, hence the mapping $x\rightarrow\frac{1}{d}K(\frac{x}{d})$ is Lipschitz continuous with constant $L_1/d^2$, and $k_n(x,y)=\frac{1}{d}K(\frac{g(x,y)}{d})$ satisfies the variation condition (Assumption \ref{ass:variation}) with constant $L'=C d^{-4}$. Furthermore, $k_{n}(x,y)\leq M'=C\frac{1}{d}$ and $E|k_{n}(X,Y)|\leq C$ for independent $X$, $Y$ and thus $E k_{1,n}^2\leq C\frac{1}{d}$. 
By the proof of Lemma \ref{lem1} we find that $(k_{1,n}(X_i))_{i\in\Z}$ is $L_2$-near epoch dependent with approximation constants 
$a'_l=C/d_n^2l^{-3}$. As in the proof of Lemma C.1 of \citet{dehling:vogel:wendler:wied:2015:vs4}, we have that
\[
	\left|Ek_{1,n}(X_i)k_{1,n}(X_{i+k})\right|
\leq 10\left\|E(k_{1,n}(X_{i+k})|\mathcal{G}_{i+k-l}^{i+k+l})\right\|_{2+\delta}^2\beta^{\frac{\delta}{2+\delta}}_{k-2l}
\]
\[
	\quad +\,2\left\|k_{1,n}(X_i)\right\|_2\left\|k_{1,n}(X_{i+k})-E(k_{1,n}(X_{i+k})|\mathcal{G}_{i+k-l}^{i+k+l})\right\|_2
\leq C\frac{1}{d_n^2}\beta_{l}+C\frac{1}{\sqrt{d_n}}a'_l,
\]
where $\mathcal{G}_i^j$ denotes the $\sigma$-field generated by $Z_i,\ldots, Z_j$,
so we obtain by stationarity that
\begin{equation*}
	E\left(\frac{2}{n}\sum_{i=1}^nk_{1,n}(X_i)\right)^2\leq\frac{4}{n}\sum_{i=1}^\infty\left|E\left( k_{1,n}(X_1)k_{1,n}(X_i)\right)\right|\leq C\frac{1}{nd_n^{5/2}}\sum_{i=1}^\infty((3/i)^{3}+\beta_{i})
\end{equation*}
converges to 0 since $n d_n^{5/2}\rightarrow\infty$. So the second summand of (\ref{eq:decomposition}) converges to 0. 
%
%
%
%
%
%
%
%
%
%
For the degenerate part, we use that $k_{2,n}(x,y)$ is a degenerate kernel bounded by $C/d_n$, so we can prove similarly to Lemma B.2 of \citet{dehling:vogel:wendler:wied:2015:vs4} that
\begin{equation*}
\left\|k_{2,n}(X_i,X_{i+k+2l})-k_{2,n}(X_{i,l},X_{i+k+2l,l})\right\|_2\leq C(\sqrt{L'\epsilon}+M'a_l^{\frac{\delta}{2+\delta}}\phi^{\frac{\delta}{2+\delta}}(\epsilon)+M'\beta_k),
\end{equation*}
where we write $X_{i,l}$ short for $f_l(Z_{i-l},\ldots,Z_{i+l})$,
and can conclude that
\[
	\left\|\frac{2}{n(n-1)}\sum_{1\leq i<j\leq n}\left(k_{2,n}(X_i,X_j)-k_{2,n}(X_{i,l},X_{i+k+2l,l})\right)\right\|_2
\]\[
	\leq Cn^{-3/4}(\sqrt{ML'}+M')\leq C(d_n^{8/3} n )^{-3/4}\rightarrow 0
\]
by our assumptions on $d_n$. Similarly (compare Lemma B.4 of \citet{dehling:vogel:wendler:wied:2015:vs4}) we get
\begin{equation*}
	\left\| \frac{2}{n(n-1)}\sum_{1\leq i<j\leq n}k_{2,l,n}(X_{i,l},X_{j,l})-k_{2,n}(X_{i,l},X_{j,l})\right\|_2\leq Cn^{-3/4}\left(\sqrt{ML'}+M'\right)\rightarrow 0,
\end{equation*}
where $k_{2,l,n}$ is defined by the Hoeffding decomposition of $k_n$ with respect to the distribution of $X_{0,l}$. 
Finally, as in Lemma B.5 of \citet{dehling:vogel:wendler:wied:2015:vs4},
\begin{equation*}
	\left|Ek_{2,l,n}(X_{i_1,l},X_{i_2,l})k_{2,l,n}(X_{i_3,l},X_{i_4,l})\right|\leq C(M')^2\beta_{m-l},
\end{equation*}
with $m=\max\left\{i_{(2)}-i_{(1)},i_{(4)}-i_{(3)}\right\}$, where $i_{(1)},\ldots,i_{(4)}$ are the ordered indices $i_1,i_2,i_3,i_4$, and thus
\begin{equation*}
	E\left(\sum_ {1\leq i<j\leq n}k_{2,l,n}(X_{i,l},X_{j,l})\right)^2\leq Cn^{-2}(M')^2l^2\rightarrow0
\end{equation*}
for $l=\lfloor n^{1/4}\rfloor$. We convergence of $\frac{2}{n(n-1)}\sum_{1\leq i<j\leq n}k_{2,n}(X_i,Y_j)$ then follows along the lines of the proof of Lemma \ref{lem6} (Lemma B.6 of \citet{dehling:vogel:wendler:wied:2015:vs4}), and hence $\hat{u}_{u,n}$ converges to $u(U^{-1}(p))$, and the proof is complete.
\end{proof}

\begin{proof}[Proof of Theorem \ref{theo:2}] We can rewrite the variance estimator $\hat{\sigma}_p^2$ as
\[
	\hat{\sigma}^2_p =\frac{4}{\hat{u}^2_n}\sum_{r=-(n-1)}^{n-1}W\Big(\frac{|r|}{b_n}\Big) \frac{1}{n}\sum_{i=1}^{n-|r|}\hat{h}_1(X_i,t_n)\hat{h}_1(X_{i+|r|},t_n)
\]\[
	= \frac{4}{\hat{u}^2_n}\sum_{r=-(n-1)}^{n-1}W\Big(\frac{|r|}{b_n}\Big)\frac{1}{n}\sum_{i=1}^{n-|r|}h_1(X_i,t_0)h_1(X_{i+|r|},t_0)
\] \[
	+\frac{4}{\hat{u}^2_n}\sum_{r=-(n-1)}^{n-1}\frac{1}{n}\sum_{i=1}^{n-|r|}
	\left(h_1(X_i,t_0)h_1(X_{i+|r|},t_0)-\hat{h}_1(X_i,t_0)\hat{h}_1(X_{i+|r|},t_0)\right)
	W\Big(\frac{|r|}{b_n}\Big)
\] \[
+\frac{4}{\hat{u}^2_n}\sum_{r=-(n-1)}^{n-1}\frac{1}{n}\sum_{i=1}^{n-|r|}
	\left(\hat{h}_1(X_i,t_0)\hat{h}_1(X_{i+|r|},t_0)-\hat{h}_1(X_i,t_n)\hat{h}_1(X_{i+|r|},t_n)\right)
	W\Big(\frac{|r|}{b_n}\Big).
\]
By Lemma \ref{lem13}, the density estimator $\hat{u}_n$ converges to $u$. Hence the first summand converges to $\sigma^2_p$  by Theorem 2.1 of \citet{dejong:2000} and Slutsky's theorem.
The second and the third summand converge to 0 by Lemma C.3 of \citet{dehling:vogel:wendler:wied:2015:vs4} and Lemma \ref{lem12}, respectively.
\end{proof}


\section*{Acknowledgement}
The research was supported by the DFG Sonderforschungsbereich 823 (Collaborative Research Center) {\em Statistik nichtlinearer dynamischer Prozesse}.
The authors thank Svenja Fischer and Wei Biao Wu for providing the river Elbe discharge data set and the Argentina rainfall data set, respectively. We are very grateful to the anonymous referee for the comments, which have helped to improve and clarify this manuscript.

{\small

}

\end{document}